\numberwithin{equation}{section}
\definecolor{myred}{rgb}{0.75,0,0}
\definecolor{mygreen}{rgb}{0,0.5,0}
\definecolor{myblue}{rgb}{0,0,0.65}
    \def\AM{{\mathbb{A}}}
    \def\CM{{\mathbb{C}}}
\def\GG{{\mathfrak G}}  \def\gg{{\mathfrak g}}  \def\GM{{\mathbb{G}}}
  \def\hg{{\mathfrak h}}
\def\KG{{\mathfrak K}}    
  \def\lg{{\mathfrak l}}
\def\OG{{\mathfrak O}}    
    \def\QM{{\mathbb{Q}}}
\def\SG{{\mathfrak S}}    
    \def\TM{{\mathbb{T}}}
    \def\ZM{{\mathbb{Z}}}
    \def\CC{{\mathcal{C}}}
    \def\DC{{\mathcal{D}}}
    \def\FC{{\mathcal{F}}}
    \def\GC{{\mathcal{G}}}
  \def\kb{{\mathbf k}}  \def\KC{{\mathcal{K}}}
    \def\LC{{\mathcal{L}}}
    \def\NC{{\mathcal{N}}}
    \def\OC{{\mathcal{O}}}
    \def\RC{{\mathcal{R}}}
    \def\SC{{\mathcal{S}}}
    \def\VC{{\mathcal{V}}}
\def\d{\delta}
\def\l{\lambda}
\def\L{\Lambda}
\newcommand{\nc}{\newcommand}
\def\reg{{\mathrm{reg}}}
\def\rs{{\mathrm{rs}}}
\DeclareMathOperator{\Ker}{Ker}
\def\wt{\widetilde}
\def\ov{\overline}
\def\p{{}^p}
\def\to{\rightarrow}
\nc{\ic}{\mathbf{IC}}
\nc{\gl}{{\mathfrak{gl}}}
\DeclareMathOperator{\Hom}{Hom}
\DeclareMathOperator{\supp}{supp} 
\DeclareMathOperator{\End}{End} 
\DeclareMathOperator{\id}{Id}
\newtheorem{thm}{Theorem}[section]
\newtheorem{lemma}[thm]{Lemma}
\newtheorem{prop}[thm]{Proposition}
\newtheorem{cor}[thm]{Corollary}
\theoremstyle{definition}
\newtheorem{defi}[thm]{Definition}
\theoremstyle{remark}
\newtheorem{remark}[thm]{Remark}
\DeclareMathOperator{\Ext}{Ext}
\newcommand{\into}{\hookrightarrow}
\def\uk{\underline{k}}
\def\Gr{{\EuScript Gr}}
\def\Mod{\text{-}\mathrm{Mod}}
\DeclareMathOperator{\Loc}{Loc}
\def\d{\mathbf d}
\def\gl{\mathfrak{gl}}
\def\ic{\mathbf{IC}}
\def\Grdbar{\overline{\Gr^\d}}
\def\Grcon{(\Gr^{\varpi_1})^{*d}}
\newcommand{\TMp}{\:{}^\backprime\mathbb{T}}
\DeclareMathOperator{\res}{res}
\DeclareMathOperator{\ind}{ind}
\newcommand{\indLG}{\ind_L^G}
\newcommand{\resLG}{\res_L^G}
\def\sgn{\mathrm{sgn}}
\thanks{The author was supported by an NSF postdoctoral research fellowship.}
\begin{document}

\title[A geometric Schur functor]{A geometric Schur functor}
\author{Carl Mautner}

\begin{abstract}
We give geometric descriptions of the category $C_k(n,d)$ of rational
polynomial representations of $GL_n$ over a field $k$ of degree $d$
for $d\leq n$, the Schur functor and Schur-Weyl duality.  The
descriptions and proofs use a modular version of Springer theory and
relationships between the equivariant geometry of the affine
Grassmannian and the nilpotent cone for the general linear
groups. Motivated by this description, we propose generalizations for
an arbitrary connected complex reductive group of the category
$C_k(n,d)$ and the Schur functor.
\end{abstract}

\maketitle

{\footnotesize
\noindent \textsl{2010 Mathematics Subject Classification.} Primary 20G43, 14F05; Secondary 17B08.

\noindent \textsl{Key words and phrases.} modular Springer theory; Schur algebra; Schur functor; perverse sheaves; nilpotent cone; affine Grassmannian.
\par }

\section{Introduction}

\subsection{} In his 1901 thesis, Schur defined a
functor from polynomial representations of $GL_n(\CM)$ of degree $d$ to
representations of the symmetric group $\SG_d$, which is an equivalence
of categories for $n\geq d$.  Later it was observed by
Green~\cite{Green} that Schur's functor is well defined over any field
$k$, but is not an equivalence if the characteristic of $k$ is less
than or equal to $d$.  Green defines the Schur functor as follows.
Consider the group $GL_n$ and let $C_k(n,d)$ denote the category of
rational polynomial representations of $GL_n$ over $k$ of degree $d$.  The
$n$-dimensional standard representation $E$ of $GL_n$ is polynomial
and homogeneous of degree one.  Letting the symmetric group $\SG_d$
act on $E^{\otimes d}$ by permutations on the right defines a functor:
\[ \Hom(E^{\otimes d}, -): C_k(n,d) \to k\SG_d \Mod.\]

The categories $C_k(n,d)$ and  $k\SG_d \Mod$ are much more
complicated when $\mathrm{char}~k \leq d$.  The resulting categories
are not semisimple and even basic facts, for example, the dimensions
of the irreducible representations of the symmetric group, are not
known in general. The Schur functor has been used as an effective tool to relate structure
in the representation theory of the general linear groups to that of
the symmetric groups and vice versa (e.g.,~\cite{Donkin-SchurII,Donkin,Klesh}).

\subsection{} In the first part of this paper, we relate the existence
of the Schur functor to the geometry of certain singular spaces
associated to $GL_n$.   For $n\geq d$, we give a geometric
interpretation of the category $C_k(n,d)$ and the Schur functor in
terms of Springer theory for the nilpotent cone $\NC_d \subset \gl_d$.
More precisely, we show:

\begin{thm}
\label{thm-main}
For any $n \geq d$, there is an equivalence of categories:
\[ \phi^\bullet: C_k(n,d) \stackrel{\sim}{\to} P_{GL_d}(\NC_d;k),\]
where $P_{GL_d}(\NC_d;k)$ is the category of equivariant perverse
sheaves with coefficients in $k$, which takes the representation
$E^{\otimes d}$ to the Springer sheaf $\SC$.
\end{thm}

In Section~\ref{sec-SWthm}, we use this result to provide a geometric
proof of Carter-Lusztig's generalization of Schur-Weyl duality
\cite[Thm. 3.1]{CarterLusztig}.

\begin{thm}
\label{thm-SchurWeyl}
For any $d\leq n$, the morphism \[ k\SG_d \to \End_{GL_n^k}(E^{\otimes d}) \] defined by 
permuting the tensor factors is an isomorphism. 
\end{thm}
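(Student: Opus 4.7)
My plan is to apply Theorem~1.1 and reduce the statement to computing $\End(\SC)$ for the Springer sheaf. Since $\phi^\bullet$ sends $E^{\otimes d}$ to $\SC$, it induces an isomorphism
\[ \End_{GL_n}(E^{\otimes d}) \xrightarrow{\sim} \End_{P_{GL_d}(\NC_d;k)}(\SC), \]
under which the map $k[\SG_d] \to \End_{GL_n}(E^{\otimes d})$ coming from tensor-factor permutation is carried to some map $k[\SG_d] \to \End(\SC)$. It suffices to verify that this induced map is an isomorphism.

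The core geometric input is the modular Springer theorem for $GL_d$: over an arbitrary commutative ring $k$, the Weyl-group action on $\SC$ gives a canonical isomorphism
\[ k[\SG_d] \xrightarrow{\sim} \End(\SC). \]
In characteristic zero this is classical; it extends integrally in type $A$ because the Springer resolution $\mu \colon \wt{\NC_d} \to \NC_d$ is semismall and all simple $GL_d$-equivariant perverse sheaves on $\NC_d$ have torsion-free stalks (equivalently, every nilpotent orbit admits a parity $\ic$ sheaf). In particular, $\End(\SC)$ is free of rank $d!$ over $k$.

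The remaining and most delicate step is matching the two $\SG_d$-actions, i.e.\ checking that the map $k[\SG_d] \to \End(\SC)$ induced by $\phi^\bullet$ from tensor permutation coincides with the Weyl-group action from Springer theory. I would do this by tracing the construction of $\phi^\bullet$ sketched in the abstract, which passes through the affine Grassmannian of $GL_d$: via geometric Satake, $E^{\otimes d}$ is realized as the hypercohomology of a convolution $\ic_{\varpi_1}^{*d}$, and permutation of tensor factors on the representation side matches permutation of the convolution factors. Restricting this convolution symmetry to the nilpotent cone---via nearby cycles or the Mirkovi\'c--Vilonen description of the fibers of $\mu$---identifies it with the standard Weyl-group action on $\SC$. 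This compatibility is the heart of the proof; once it is in place, both sides are free $k$-modules of rank $d!$ identified via the same $\SG_d$-action, and the claim follows. A fallback strategy, if tracing $\phi^\bullet$ directly is awkward, is to verify the comparison first over $k=\CM$ using classical Schur--Weyl and Springer theory, and then transport it to arbitrary $k$ using the integrality of both sides.
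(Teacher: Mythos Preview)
Your overall strategy---reduce to $\End(\SC)$ via $\phi^\bullet$ and then identify this with $k[\SG_d]$---is exactly the paper's. The execution differs in two places, and the second contains a genuine gap.

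For the isomorphism $\End(\SC)\cong k[\SG_d]$, the paper does not use parity or decomposition-theorem arguments. It applies the Fourier--Sato transform: Juteau's isomorphism $\TM\SC\cong\GC$ gives $\End(\SC)\cong\End(\GC)$, and since $\GC$ is the intermediate extension of the regular $W$-local system on $\gg_\rs$, one has $\End(\GC)\cong\End(\pi_{\rs!}\ic_{\tilde\gg_\rs})\cong k[\SG_d]$ immediately, for any $k$. Your parity route is a legitimate alternative in type $A$, but ``torsion-free stalks'' by itself does not yield $\Hom(\ic_\lambda,\ic_\mu)=0$ for $\lambda\neq\mu$ over a general ring; that requires further argument, whereas the Fourier approach avoids the issue entirely.

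The action-matching step is where your proposal is incomplete. The paper handles it by building a commutative cube extending $\overline\phi,\overline\Phi$ to the resolutions $\tilde\NC_d\to\Grcon$ and $\tilde\gg_d\to\tilde\GG_d$. This cube shows that $\phi^\bullet$ carries the permutation action on $E^{\otimes d}$ (equivalently, the deck-transformation action on $\pi_{\Gr*}\ic_{\Grcon}$) to the \emph{restriction} action on $\SC\cong i^*\GC$. Crucially, this is \emph{not} the Fourier-transform action used above to compute $\End(\SC)$: by Proposition~\ref{prop-signs} the two $W$-actions on $\SC$ differ by the sign character. The proof concludes by observing that since the Fourier action yields an isomorphism $k[\SG_d]\to\End(\SC)$, so does its sign-twist. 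Your fallback---verify equality of the two actions over $\CM$ and transport by integrality---fails for precisely this reason: the two actions are not equal even over $\CM$, only related by sign, so no base-change argument can produce the identification. What is actually needed is the cube plus the sign comparison, not a characteristic-zero check.
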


\begin{cor}
There is a commutative diagram of functors:
\begin{equation}
\xymatrix{
P_{GL_d}(\NC_d;k) \ar[rr]^{\Hom(\SC,-)} \ar[d]^\simeq_{\phi^\bullet} &&
 \End(\SC) \Mod  \\
C_k(n,d) \ar[rr]^{\Hom(E^{\otimes d},-)} && k\SG_d \Mod \ar[u]_{\simeq} ,
}
\end{equation}
\end{cor}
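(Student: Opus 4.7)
The plan is to assemble the diagram purely formally from Theorems 1.1 and 1.2, with only the compatibility of the various ring and module structures requiring care.

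First I would fix the right vertical arrow. Theorem 1.2 provides a ring isomorphism $k[\SG_d]\xrightarrow{\sim}\End_{G_n^k}(E^{\otimes d})$, and the equivalence $\phi^\bullet$ from Theorem 1.1 provides a ring isomorphism $\End_{G_n^k}(E^{\otimes d})\xrightarrow{\sim}\End(\phi^\bullet E^{\otimes d})=\End(\SC)$ (since $\phi^\bullet$ is fully faithful and sends $E^{\otimes d}$ to $\SC$). Composing these identifies $k[\SG_d]$ with $\End(\SC)$, and the resulting ring isomorphism is what induces the right vertical equivalence of module categories.

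Next I would check commutativity on objects. For any $V\in C_k(n,d)$, because $\phi^\bullet$ is fully faithful, there is a natural isomorphism of abelian groups
\[
\Hom_{G_n^k}(E^{\otimes d},V)\;\xrightarrow{\sim}\;\Hom_{G_d}(\phi^\bullet E^{\otimes d},\phi^\bullet V)=\Hom_{G_d}(\SC,\phi^\bullet V).
\]
Both sides carry natural right actions: on the left, $\End_{G_n^k}(E^{\otimes d})$ acts by precomposition, and on the right, $\End(\SC)$ acts by precomposition. Since $\phi^\bullet$ is a functor, precomposition with $f\in\End(E^{\otimes d})$ corresponds under the isomorphism to precomposition with $\phi^\bullet(f)\in\End(\SC)$. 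Pulling the left-hand action back along $k[\SG_d]\xrightarrow{\sim}\End(E^{\otimes d})$ recovers the permutation action of $\SG_d$ on $E^{\otimes d}$ by the very definition of that isomorphism in Theorem 1.2. Hence the isomorphism above is $k[\SG_d]$-linear once one transports the $\End(\SC)$-structure back along the ring isomorphism of the previous paragraph, which is exactly commutativity of the diagram on objects.

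Naturality in $V$ is automatic from the functoriality of $\phi^\bullet$ and of the $\Hom$ functors involved, so the diagram of functors commutes up to natural isomorphism. The only place where any actual work is needed is verifying that the ring isomorphism $\End_{G_n^k}(E^{\otimes d})\cong\End(\SC)$ induced by $\phi^\bullet$ is compatible with the module actions on both sides, and this is formal once one observes that $\phi^\bullet$ is an equivalence taking $E^{\otimes d}$ to $\SC$. I expect no genuine obstacle in the corollary itself; the substantive content has been absorbed into Theorems 1.1 and 1.2.
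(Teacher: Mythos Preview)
Your argument is correct and essentially matches the paper's treatment: the corollary is presented as an immediate consequence of Theorems~1.1 and~1.2, with no separate proof beyond the sentence opening Section~6.

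One point of difference worth flagging. You define the right vertical equivalence by transporting the $\SG_d$-action on $E^{\otimes d}$ along $\phi^\bullet$; with that choice the diagram commutes tautologically, as you observe. The paper, however, specifies (in Section~6) that the $\SG_d$-action on $\SC$ is the \emph{intrinsic} one coming from Springer theory---via deck transformations on $\GC$ and the identification $\SC \cong i^*\GC$. With that reading the commutativity is not formal: it requires knowing that $\phi^\bullet$ carries the permutation action on $E^{\otimes d}$ to this Springer-theoretic action on $\SC$. That compatibility is precisely what the proof of Theorem~1.2 in Section~5 establishes, using the commutative cube relating the Springer resolution $\tilde\NC_d \to \NC_d$ to the convolution resolution $\Grcon \to \Grdbar$. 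So your proof is valid for the corollary as literally stated in the introduction, but the paper's intended statement has a bit more content: the ring isomorphism $k[\SG_d]\cong\End(\SC)$ is not defined circularly through $\phi^\bullet$ but independently, and the substance lies in showing the two a priori different identifications agree.
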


\subsection{} The proof of these statements is reached by crossing two bridges.

The first bridge is the geometric Satake equivalence
(cf. Section~\ref{sec-geomsat}), which relates the representation
theory of a split reductive group $G$ over $k$ to a category of
equivariant perverse sheaves on the affine Grassmannian for the
complex reductive group $G^\vee_\CM$ with dual root datum.  In
particular, it allows us to identify the category of polynomial
representations $C_k(n,d)$ with equivariant perverse sheaves on a
closed subvariety $\Grdbar$ (cf.~\ref{sec-locver}) of the affine
Grassmannian $\Gr_{GL_n}$.

The second bridge is a relationship between the nilpotent cone and the
affine Grassmannian for the group $GL_n$.  In the paper \cite{lu},
Lusztig introduced a map $\phi: \NC_d \to \Grdbar$.  Using a map in
the other direction defined on quotient stacks, we prove
(Theorem~\ref{thm-equiv}) an equivalence of categories of equivariant
perverse sheaves.

\subsection{}  In the second part of the paper, we shift our focus to
an arbitrary connected complex reductive group $G$. We observe that
our reinterpretation of the Schur functor as a functor from the
category of adjoint-equivariant perverse sheaves on the nilpotent cone
to modules over endomorphisms of the Springer sheaf is well-defined
for any reductive group.

Moreover, we give a reformulation in terms of
the Fourier-Sato transform $\TM$ on the Lie algebra $\gg$.  Let $j_\rs
: \gg_\rs \hookrightarrow \gg$ denote the open embedding of the
regular semi-simple locus and $i: \NC \hookrightarrow \gg$ the closed
embedding of the nilpotent cone.  Consider the functor 
\[\FC := j_\rs^* \circ \TM \circ i_* : P_G(\NC;k) \to P_G(\gg_\rs;k),\]
where $P_G(X;k)$ denotes the category of $G$-equivariant perverse
sheaves on a $G$-variety $X$.  We show that $\FC$ factors
through $\Loc_W(\gg_\rs)$, the category of local systems on $\gg_\rs$
with Weyl group monodromy shifted by $\dim \gg$ to be perverse, and
identify $\FC$ with the functor $\Hom(\SC,-)$.

We propose that $P_G(\NC;k)$ should be thought of as a generalization
of the category $C_k(n,d)$ and $\FC$ as the \emph{geometric Schur
  functor} for the group $G$.

\subsection{Related work}
This paper is a revised version of part of the author's Ph.D. thesis~\cite{Mautnerthesis}.  

A generalization of some of the results in this paper has since been
explored by Achar, Henderson and Riche.  In~\cite{AchHen} and then
in~\cite{AchHenRic}, they consider a split reductive group $G$ over
$k$ and the category of `small' representations of $G$ (analogous to
a variant of $C_k(n,d)$).  They show that the corresponding part of
the affine Grassmannian for the dual group $G^\vee_\CM$ contains an
open locus that maps $G^\vee$-equivariantly to the nilpotent cone for
$G^\vee$.  They use this map to construct a functor $\Psi_G$ between
categories of perverse sheaves on the affine Grassmannian and
nilpotent cone. 
The main result of these papers is an equivalence between the
composition of $\Psi_G$ with the geometric Schur functor and the
composition of the Satake equivalence with taking the zero weight
space together with its $W$-action.

Another closely-related picture arises in work of Achar and the
author.  Motivated by the equivalence between $C_k(n,d)$ and
$P_{GL_n}(\NC;k)$, we propose~\cite{AchMau} a \emph{geometric Ringel
  functor} for the equivariant derived category $D_G(\NC;k)$.  The
definition is similar to that of the geometric Schur functor in this
paper.  In particular, we define a functor 
\[ \RC := i^* \TM i_*[\dim \NC - \dim \gg]: D_G(\NC;k) \to D_G(\NC;k) \]
and show that it is an autoequivalence.

We expect that the functors $\FC$ and $\RC$ will be useful tools in
understanding the categories $P_G(\NC;k)$ and
$D_G(\NC;k)$.  Since this paper was submitted, work in this direction
for $G=GL_n$ has appeared in~\cite{AHJRgenSpr}, in which the results of
Sections~\ref{sec-GSF} and~\ref{sec-GAF} are extended to define a
stratification or iterated `recollement' of the category
$P_{GL_n}(\NC_d;k) \cong C_k(n,d)$.

\subsection{} Here is an outline of the paper.  Section \ref{sec-DP}
contains a summary of the various ingredients that will be used in the
paper.  In Section \ref{sec-proj}, we study various maps between the
nilpotent cone and affine Grassmannian and the relations they satisfy.
We then use these relations in Section \ref{sec-equiv} to prove an
equivalence of categories of perverse sheaves, with which, in Section
\ref{sec-SWthm}, we deduce Carter-Lusztig's version of Schur-Weyl
duality as a corollary.  Sections \ref{sec-GSF} and \ref{sec-GAF}
contain a proposal for a `geometric Schur functor.' 

\subsection{Acknowledgments}
This paper has been a long time coming and so the author had a number
of years to benefit from useful conversations and deep insights from
many people.  He would like to thank in particular: David Ben-Zvi for
continued support and advice, Daniel Juteau whose thesis was a source
of inspiration for much of this paper, Pramod Achar for encouragement,
Geordie Williamson for comments on a draft of the paper and an
anonymous referee for a careful reading and many helpful
comments. Thanks as well to Dennis Gaitsgory, Joel Kamnitzer, David
Helm, David Nadler, Catharina Stroppel, Zhiwei Yun, and Xinwen Zhu.

\section{Dramatis Personae}
\label{sec-DP}

\subsection{Notation} 

For a scheme $X$
defined over the complex numbers with an action of a reductive group
$G$, we denote by $P_G(X;k)$ (respectively $D_G(X;k)$) the category of
$G$-equivariant perverse sheaves (respectively the $G$-equivariant
derived category) with coefficients in $k$ on $X$. This is equivalent
to the category of perverse sheaves (respectively the constructible
derived category) on the quotient stack $[X/G]$
(cf.~\cite[Rmk. 5.5]{LO}).  For a locally closed and
$G$-invariant subscheme $Y\subset X$ and a $G$-equivariant irreducible
local system $\LC$ on $Y$ shifted to be perverse, we denote by
$\ic(Y,\LC) \in P_G(X;k)$ the simple perverse sheaf defined by the
intersection cohomology complex on $Y$ with coefficients in $\LC$.  We
denote the constant sheaf on $X$ by $\uk_X$.

\medskip

We denote the general linear group $GL_r$ by $G_r$ and its Lie algebra
by $\gg_r$.  We consider the group $G_r$ over the field $k$. We fix
the standard upper triangular Borel subgroup $B_r$ of $G_r$, with its
unipotent radical $U_r$.  Let $T_r \cong \GM_m^r$ be the Cartan
subgroup of diagonal matrices and $\hg_r \cong \AM^r$ its Lie algebra,
the standard Cartan subalgebra.  The Weyl group $W_r=\SG_r$ acts on
$\hg_r=\AM^r$ by permuting its coordinates.  We denote the weight
lattice of $G_r$ by $\Lambda_r$ and identify it with $\ZM^r$ using our
identification of the Cartan subgroup.  The set of dominant weights
$\L^+_r$ are those $\l=(\l_1,\ldots, \l_r)\in\L_r$ such that $\l_i \ge
\l_j$ for all $i<j$.

For $n$ and $d$ positive integers, we denote the category of rational homogeneous polynomial
representations of $G_n$ of degree $d$ by $C_k(n,d)$
(cf.~\cite[A.3]{Jan} for the definition). Let $\L(n,d)\subset \L_n$ be
the set of weights $(\l_1,\l_2, \ldots, \l_n)$ such that $\sum_i
\l_i=d$ and $\l_i \ge 0$ for all $1\le i \le
n$. As explained in~\cite[Prop. A.3]{Jan}, if follows from work of
Donkin~\cite{Donkin-SchurII} that the
category $C_k(n,d)$ coincides with the full subcategory of those
rational representations all of whose weights lie in the subset
$\Lambda(n,d)$.

We now fix $n\geq d$.
In what follows, we will give a geometric description of the category
$C_k(n,d)$ that does not depend on $n$.  It
is well-known that for any $n>d$, there is an equivalence $C_k(n,d)
\cong C_k(d,d)$ (see \cite[Thm 4.3.6]{Martin}).

\begin{remark}
Many of the proofs and results of this paper hold when the field $k$ is replaced by any Noetherian commutative ring $\kb$ of finite global dimension.

In this more general setting, $C_\kb(n,d)$ should be interpreted as the full
subcategory of rational representations $M$ of $GL_n$ over $\kb$ such
that the weight module $M_\l = 0$ for all $\l \notin \Lambda(n,d)$.  As
mentioned above, when $\kb$ is a field this is known to
coincide with the category of polynomial representations of
degree $d$.  

With this interpretation of $C_\kb(n,d)$, the statements and proofs of
all of the results through Section~\ref{sec-SWthm} hold with coefficients in any
Noetherian commutative ring of finite global dimension.

In Sections~\ref{sec-GSF} and~\ref{sec-GAF} we use the field
assumption explicitly.  It seems likely that this can be gotten around, for example by
using the results of~\cite{AchHenRic}.
\end{remark}

\subsection{The Schur Functor}

Using the action of the symmetric group on $E^{\otimes d}$ on the right, one
can define a functor from $C_k(n,d)$ to the category of
representations of the symmetric group $\SG_d$.

\begin{defi}
The Schur functor is defined as 
\[\Hom(E^{\otimes d},-):C_k(n,d) \to k\SG_d \Mod.\]
\end{defi}

From this definition, it is clear that it admits a left adjoint.  A
slightly different description of the Schur functor also yields a
right adjoint (cf. \cite{DEK}).   We interpret the Schur functor
geometrically and obtain similar descriptions for its adjoints as well.

\subsection{The nilpotent cone for $GL_d$} 

For any commutative $\CM$-algebra $R$, the $R$-points of $\gg_d$ is the set of endomorphisms of the free $R$-module $R^d$.  The $R$-points of the quotient stack $[\gg_d/G_d]$
is the groupoid whose objects are endomorphisms of locally free $R$-modules of rank $d$
and morphisms are isomorphisms between such pairs.

Let $\NC_d \subset \gg_d$ denote
the nilpotent cone, the variety parametrizing nilpotent endomorphisms
of $\CM^d$. For $R$ a commutative $\CM$-algebra, the set of
$R$-points of $\NC_d$ is the set of nilpotent endomorphisms of $R^d$ and the groupoid of $R$-points of $[\NC_d/G_d]$ consists of nilpotent endomorphisms of locally free $R$-modules of rank $d$ and isomorphisms between them.

We will be interested in the category $P_{G_d}(\NC_d;k)$ of equivariant perverse
sheaves on the nilpotent cone.  The $G_d$-orbits in $\NC_d$ are
equivariantly simply connected and labeled by partitions of $d$
according to the Jordan decomposition.

\subsection{The Springer and Grothendieck resolutions} 
We now list some basic facts about the Springer and Grothendieck resolutions of $\NC$ and $\gg$ for an arbitrary reductive group. For a more detailed account, see \cite[Chapter 3]{CG}.

Recall that if $\hg \subset \gg$ is a Cartan subalgebra, then Chevalley's restriction theorem tells us that $\gg/\!/G \cong \hg/W$.  Let $\chi:\gg \to \gg/\!/G$ be the quotient map.  Note that $\chi^{-1}(0)=\NC$.   Let $\hg_\reg \subset \hg$ be the complement of the reflection hyperplanes.

Recall the existence of the Grothendieck simultaneous resolution $\pi: \tilde\gg
\to \gg$, the Springer resolution $\pi_\NC:\tilde\NC \to \NC$ and the map
$\tilde\chi:\tilde\gg \to \hg$, which fit into the following diagram:
\begin{equation}
\label{diag-gg}
\xymatrix{
& \tilde\NC \ar@{^{(}->}[r]\ar[ld]_{\pi_\NC} & \tilde\gg
\ar[ld]^{\pi}\ar[rd]_{\tilde\chi} & \\
\NC \ar@{^{(}->}[r]^i\ar[rd] & \gg\ar[rd]^{\chi} & & \hg \ar[ld] \\
& \{0\} \ar@{^{(}->}[r] & \hg/W &
}
\end{equation}

The map $\pi_\NC$ is semi-small, $\pi$ is small and both are proper.  The restriction
of the map $\chi$ to $\gg_{\rs}=\chi^{-1}(\hg_\reg/W)$ is smooth with fibers isomorphic to
$G/T$.  The fundamental group of $\hg_{\reg}/W$ is the braid group $B_W$.  As
the fibers of $\chi$ over $\hg_{\reg}/W$ are simply connected, the fundamental group of
$\gg_{\rs}$ is also the braid group.  The restriction of $\pi$ to
$\tilde\gg_{\rs}$ is the $W$-cover corresponding to the subgroup of
pure braids $P_W$.  In particular, the short exact sequence $1 \to P_W
\to B_W \to W \to 1$ arises from the long exact sequence of homotopy
groups for the Galois cover $\tilde\gg_\rs \to \gg_\rs$.

\medskip

In the case $G=G_d$, the quotient $\hg/W = \AM^d/\SG_d$ is naturally isomorphic to
the affine space $Q_d$ of monic polynomials of degree $d$.  Under this isomorphism, $\chi$ is identified with the map sending an endomorphism to its characteristic polynomial.  The regular locus $\hg_\reg/W$ is identified with the open subvariety $(Q_d)_\reg \subset Q_d$ consisting of monic polynomials with $d$ distinct roots. 

\subsection{Springer theory}

Here we review Juteau's study of modular Springer theory~\cite{Ju-thesis}.  Before doing so, we remark that while Juteau works with varieties over finite fields, we consider the analogous situation over the complex numbers.

Let $\SC=\pi_{\NC*}\uk_{\tilde\NC}[\dim \tilde\NC]$ denote the Springer sheaf and
$\GC=\pi_* \uk_{\tilde\gg}[\dim \gg]$ the Groth\-en\-dieck sheaf.

There exists a \emph{Fourier-Sato transform} functor, denoted
\[\TM: D_G(\gg;k) \to D_G(\gg^*;k).\]
It is defined by composing the functor defined in~\cite[\S 3.7]{KS1}
with the shift $[\dim \gg]$.  With this shift, $\TM$ is $t$-exact for
the perverse $t$-structure~\cite[Proposition~10.3.18]{KS1}.  This
functor is an equivalence of categories with inverse
\[
\TMp: D_G(\gg^*;k) \to D_G(\gg;k).
\]

We fix an isomorphism $\gg^* \cong \gg$ and will identify them from
now on.  Brylinski~\cite[\S 11]{BRY} proves the following proposition
and Juteau~\cite{Ju-thesis} observes that it still holds in the
modular case.

\begin{prop} There is a natural isomorphism $\TM(\SC)\cong \GC$.
\end{prop}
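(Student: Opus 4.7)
The plan is to deduce the isomorphism from the behavior of the Fourier-Sato transform on sub-vector-bundles over the flag variety, following the strategy of Hotta-Kashiwara in the $D$-module setting and adapting it to modular coefficients as in Juteau's thesis. The first step is to realize $\tilde\gg$ and $\tilde\NC$ as $G$-equivariant sub-vector-bundles of the trivial bundle $\BC\times\gg\to\BC$, where $\BC=G/B$. Explicitly, $\tilde\gg=\{(\bg',x):x\in\bg'\}$ has fiber $\bg'$ over the Borel subalgebra $\bg'$, and $\tilde\NC=\{(\bg',x):x\in[\bg',\bg']\}$ has fiber the nilradical $\ng':=[\bg',\bg']$. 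A direct computation using an invariant non-degenerate bilinear form on $\gg$ shows that $(\ng')^\perp=\bg'$ for every Borel, so that under the induced identification $\gg^*\cong\gg$ the orthogonal sub-bundle $\tilde\NC^\perp\subset\BC\times\gg$ is precisely $\tilde\gg$. Denoting the two projections by $p:\BC\times\gg\to\gg$ and $q:\BC\times\gg\to\BC$, the Springer and Grothendieck maps factor as $\pi_\NC=p\circ i_{\tilde\NC}$ and $\pi=p\circ i_{\tilde\gg}$.

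Next I would combine the fiberwise Fourier-Sato identity with a base-change argument. On the trivial vector bundle $q$, the basic computation for sub-bundles gives: for any $G$-equivariant $F\subset\BC\times\gg$,
\[
\TM_q\bigl((i_F)_*\ic_F\bigr)\;\cong\;(i_{F^\perp})_*\ic_{F^\perp},
\]
where $\TM_q$ denotes the fiberwise Fourier-Sato transform; the cohomological shifts produced on each fiber are absorbed into the IC normalization, thanks to the $t$-exactness of the shifted transform. Applied to $F=\tilde\NC$ together with the identification $\tilde\NC^\perp=\tilde\gg$, this yields $\TM_q((i_{\tilde\NC})_*\ic_{\tilde\NC})\cong(i_{\tilde\gg})_*\ic_{\tilde\gg}$. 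Invoking the base-change identity $\TM_\gg\circ p_*\cong p_*\circ\TM_q$, one concludes
\[
\TM_\gg(\SC)\;=\;\TM_\gg\bigl(p_*(i_{\tilde\NC})_*\ic_{\tilde\NC}\bigr)\;\cong\;p_*\TM_q\bigl((i_{\tilde\NC})_*\ic_{\tilde\NC}\bigr)\;\cong\;p_*(i_{\tilde\gg})_*\ic_{\tilde\gg}\;=\;\GC.
\]

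The main technical obstacle is keeping track of the cohomological shifts: the normalizing shift built into $\TM_\gg$, the analogous shift for $\TM_q$, and the $\ic$-shifts on sub-bundles of different dimensions ($\dim\tilde\gg$ and $\dim\tilde\NC$ differ by $\dim\BC$) must conspire to cancel in the final isomorphism. A secondary point is verifying the base-change identity $\TM_\gg\circ p_*\cong p_*\circ\TM_q$ in the modular equivariant setting, which should reduce to the standard compatibility of Fourier-Sato with proper direct image applied to the proper maps $p|_{\tilde\gg}=\pi$ and $p|_{\tilde\NC}=\pi_\NC$.
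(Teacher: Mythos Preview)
The paper does not prove this proposition; it simply attributes the result to Juteau's thesis and moves on. Your outline is precisely the argument Juteau gives (itself the Fourier--Sato analogue of the Hotta--Kashiwara proof for $\DC$-modules), so there is nothing to compare: you have reconstructed the intended proof.

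A couple of minor comments on the write-up. First, what you call the ``base-change identity'' $\TM_\gg\circ p_*\cong p_*\circ\TM_q$ is more accurately the compatibility of Fourier--Sato with proper direct image (Kashiwara--Schapira, \S 3.7); since $\BC$ is projective, $p$ is proper and $p_!=p_*$, which is what makes the argument go through unchanged with arbitrary coefficients. Second, the shift bookkeeping you flag does indeed cancel: the sub-bundle formula produces a shift by the codimension of $\tilde\NC$ in $\BC\times\gg$, which equals $\dim\tilde\gg$, so the $\ic$ normalizations on $\tilde\NC$ and $\tilde\gg$ match up once the ambient shift in $\TM$ is absorbed. Neither point is a gap; your sketch is correct as it stands.
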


Using this, Juteau defines a modular Springer correspondence from
irreducible representations of $W$ to $\ic$-sheaves on $\NC$. The
correspondence associates to an irreducible $W$-representation the
image of the functor $\TM j_{\rs!*}$ applied to the corresponding
local system on $\gg_\rs$.  The $\ic$-sheaves that occur in the
correspondence are precisely those contained in the top (or
equivalently in the socle) of the Springer sheaf.  To see that this is
true, note that the socle of $k[W]$ consists of the irreducible
representations of $W$ with positive multiplicities.  Thus the socle
of $j_{!*} j^* \GC=\GC$ consists of the $!*$-extensions of local
systems corresponding to irreducible $W$-representations.

\medskip

The Grothendieck sheaf $\GC$ carries a natural $W$-action because it
is a Goresky-MacPherson extension of a pushforward along a $W$-cover.
Using this action, we can equip the Springer sheaf $\SC$ with a
$W$-action in two ways: by the Fourier transform or by restriction to
the nilpotent cone.

\begin{prop}
\label{prop-signs}
  The two $W$-actions on $\SC$ --- one defined by Fourier transform
  and the other by restriction to the nilpotent cone --- differ by the
  sign character. In other words, there is an isomorphism of $W$-sheaves:
\[\TM(\GC) \cong i^* \GC \otimes \sgn.\]
\end{prop}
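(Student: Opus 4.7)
My plan is to identify the underlying complexes on both sides with the Springer sheaf $\SC$, and then to compare the two resulting $W$-actions on $\SC$.

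For the identification at the level of complexes, I would apply $\TM_\gg$ to the preceding proposition's identity $\TM_\gg(\SC) \cong \GC$ and invoke Fourier inversion $\TM_\gg^2 \cong a^*$ (where $a \colon \gg \to \gg$ is the antipode). Since $\SC$ is supported on the $\Gm$-invariant cone $\NC$, one has $a^*\SC \cong \SC$ canonically via the $\Gm$-equivariant structure, yielding $\TM_\gg(\GC) \cong \SC$. For $i^*\GC$, I would apply proper base change to the Cartesian square formed by the Springer and Grothendieck resolutions over $i\colon \NC \hookrightarrow \gg$, obtaining $i^*\GC \cong \pi_{\NC*}\tilde i^* \ic_{\tilde\gg}$; since $\tilde\gg$ and $\tilde\NC$ are smooth, this agrees with $\SC$ up to a shift by $\codim_{\tilde\gg}\tilde\NC = r = \rank G$.

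For the comparison of $W$-actions, I would analyze the two induced $W$-isotypic decompositions of $\SC$. Each action sorts the simple summands of $\SC$ (coming from the decomposition theorem applied to $\pi_\NC$) into $W$-isotypic components, and the content of the proposition is that the Fourier-transferred and restriction-transferred decompositions are related by tensoring with $\sgn$, i.e., the IC sheaves attached to $\rho$ and $\rho \otimes \sgn$ are swapped. Since the sign character is determined by its values on simple reflections, it suffices to verify, for each simple reflection $s \in W$, that the automorphisms $\phi^\TM_s, \phi^i_s \in \End(\SC)$ satisfy $\phi^\TM_s = -\phi^i_s$. I would reduce to the rank-one case via a Levi subgroup $L_s$ of semisimple rank one containing $s$, using compatibility of both $\TM_\gg$ and $i^*$ with parabolic restriction. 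For $L_s \cong GL_2$ (or $SL_2$), the Springer sheaf decomposes as $\SC_{L_s} \cong \ic_{\NC_{L_s}} \oplus \delta_0$, and the two actions of $W_{L_s} \cong \ZM/2\ZM$ on these two summands can be computed directly and shown to differ by sign.

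I expect the main obstacle to be the explicit rank-one calculation and the careful tracking of perverse and shift normalizations through the reduction. A conceptually cleaner but technically delicate alternative is to work on a transverse slice to a regular element: locally $\gg$ decomposes as $\NC \times \hg$ transversally, $\TM_\gg$ factors along the $\hg$-direction, and the reflection action of $W$ on $\hg$ has determinant $\sgn(w)$, which enters the Fourier transform as a Jacobian factor and globally accounts for the sign twist between the two actions.
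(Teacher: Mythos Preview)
The paper does not supply its own proof of this proposition. It records the statement, attributes the $\bar\QM_\ell$-version to \cite{HOT}, the $\DC$-module version to \cite{GiSpr} and \cite{HK}, and says that a proof in the modular case ``is to appear in work of Achar, Henderson, and Riche.'' So there is no argument here against which to compare your proposal; in this paper Proposition~\ref{prop-signs} is used as a black box.

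On the merits of your outline: the identification of both underlying complexes with $\SC$ (via Fourier inversion and proper base change along the Springer/Grothendieck square) is correct and standard, and the strategy of checking $\phi^\TM_s=-\phi^i_s$ on simple reflections by passing to a semisimple-rank-one Levi through parabolic restriction is indeed how the cited arguments are organized. Two cautions, however. First, your sentence about sorting ``simple summands of $\SC$ coming from the decomposition theorem'' into isotypic pieces is characteristic-zero reasoning; over general $k$ the Springer sheaf need not be semisimple, and you should discard that motivation (your actual reduction to simple reflections does not rely on it). Second, the asserted splitting $\SC_{L_s}\cong \ic_{\NC_{L_s}}\oplus\delta_0$ and the ensuing ``compute the two actions on each summand'' step is again a characteristic-zero shortcut; in the modular setting the rank-one comparison has to be carried out by a direct stalk/cohomology computation that does not presuppose a decomposition. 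Your alternative transverse-slice heuristic is closer to Hotta's original approach, but turning the ``$\det(w)=\sgn(w)$ Jacobian factor'' into a precise statement for the Fourier--Sato transform with arbitrary coefficients (rather than for $\DC$-modules) is exactly the delicate point, and you should not expect it to be shorter than the Levi reduction.
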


A version of this proposition for $\bar\QM_\ell$-sheaves appeared
in~\cite{HOT} and was proven for $\DC$-modules in~\cite{GiSpr}
and~\cite{HK}.  A proof in the modular case is given in \cite{AHJRsign}).

\subsection{The affine Grassmannian} 

We first recall the affine Grassmannian for $GL_n$, which
will play a role analogous to $\NC_d$, and then the Beilinson-Drinfeld
Grassmannian, which corresponds under the same analogy to $\gg_d$.

\subsubsection{Local version} 
\label{sec-locver}
Let $\Gr$ denote the affine
Grassmannian over $\CM$ for the group $G_n = GL_n$. In other
words, $\Gr$ is the ind-scheme over $\CM$ whose $R$-points form the set
$G_n(\KC)/G_n(\OC)$ where $\KC = R((t))$ is the ring of formal Laurent series and $\OC = R[[t]]$ is the ring of formal power series, for $R$ a
commutative $\CM$-algebra.  The group $G_n(\KC)$ acts transitively on
the set of $\OC$-lattices in $\KC^{\oplus n}$, and the stabilizer of
the standard lattice is $G_n(\OC)$.  It follows that one can view the
$R$-points of $\Gr$ as lattices.

For each $\l \in \L^+_n$, we associate the
orbit $\Gr^\lambda$ in $\Gr$ of the group scheme with $R$-points $G_n(\OC)$.
Let $\d=(d,0,\ldots,0)\in \L^+_n$.  We will be interested in $\Grdbar$
and $G_n(\OC)$-perverse sheaves supported on it.  Let $\OC_d$ denote
the quotient $\OC/t^d\OC$.  As the congruence subgroup
$\Ker(G_n(\OC)\to G_n(\OC_d))=1+t^d\gg_n(\OC)$ acts trivially on
$\Grdbar$, it is equivalent to study $P_{G_n(\OC_d)}(\Grdbar;k)$.
From the definitions above, one finds that $\Grdbar$ is a projective
variety parametrizing lattices $L$ contained in the standard lattice
$L_0=\OC^{\oplus n}$ such that the quotient $L_0/L$ is a locally free $R$-module of rank $d$.

The $G_n(\OC)$-orbits in $\Grdbar$ are labeled by the set $\L(n,d) \cap \L^+_n = \{ (\l_1,\ldots,\l_n)\in \L_n | \l_1\geq \ldots \geq \l_n \geq 0, \sum \l_i = d\}$.  As $n \ge d$, this set is in natural bijection with the set of partitions of $d$.

Let $\varpi_1$ be the fundamental weight $(1,0,\ldots,0)$. There is a $G(\OC)$-equivariant semi-small resolution $\Grcon \to
\Grdbar$.  The $\CM$-points of $\Grcon$ over a point $L \in \Grdbar$ are given by all flags
$0\subset V^1 \subset V^2 \subset \ldots \subset V^{d-1} \subset
L_0 /L$ preserved by the action of $\OC$.  Note that $\Grcon$ is smooth of dimension $(n-1)d$.

\subsubsection{Global version} 

The $d$-th Beilinson-Drinfeld Grassmannian~\cite{BD,MV} of $GL_n$, denoted by $\GG(n,d)$, is an ind-scheme defined over $Q_d$, the space of monic polynomials of degree $d$, whose
$R$-points are isomorphism classes of triples $(P,\FC,\beta)$ where
$P\in Q_d(R)$, $\FC$ is a rank $n$ vector bundle on $\AM^1$,
and $\beta$ is a trivialization of $\FC$ away from the zeros of $P$ in $\AM^1$.

Let $\KG$ be the ring of rational functions $R(t)$ and $\OG$ the
ring of polynomials $R[t]$.  Let $\LC_0$ denote the standard
$\OG$-lattice in $\KG^{\oplus n}$.

Following \cite{Ngo,MVy}, we will be interested in a particular smooth, $nd$-dimensional closed
subscheme of $\GG(n,d)$ which we denote by $\GG_d$. The $R$-points
of $\GG_d$ are the $\OG$-lattices $\LC \subset \LC_0$ such that
$\LC_0/\LC$ is a locally free $R$-module of rank $d$. To see that this
is a subfunctor of $\GG(n,d)$, note that any such lattice is a locally
free $\OG$-module of rank $n$, i.e., a vector bundle of rank $n$ on $\AM^1$. 

Equivalently, the points of $\GG_d$ can be expressed as
\[ \GG_d(R) = \{g \LC_0 \subset \LC_0 \mid g \in G_n(\KG) \cap \gg_n(\OG), \mathrm{deg}(\det(g))=d \}. \]

From this point of view, the natural map to $Q_d$ is defined by sending a lattice $\LC = g \LC_0$
to the determinant of $g$.

Let $G_{n,d}$ be the group scheme over $Q_d$ whose fiber at a
point $P\in Q_d(R)$ has $R$-points $G_{n,d}(P)(R)=GL_n(\OG/(P))$.

Ng\^{o} checks in~\cite[2.1.1]{Ngo}, that $G_{n,d} \to Q_d$ is smooth
with geometrically connected fibers of dimension $n^2d$.  It comes
with a natural action $G_{n,d}\times_{Q_d}\GG_d \to \GG_d$.

We will consider the stack $[\GG_d/G_{n,d}]$.  Its $R$-points form the
groupoid whose objects are pairs $(L \subset F)$ where $F$ is a locally free $\OG$-module of rank $n$
with $L$ a submodule, such that $F/L$ is locally free over $R$ of rank $d$.  Again,
$[\Grdbar/G_n(\OC_d)]$ is then simply the subfunctor of such pairs
where $P=x^d$.

Analogous to the Grothendieck resolution, there is a global
resolution, which we denote by $\tilde\GG_d$.  It is the scheme whose
$R$-points are full flags of $\OG$-lattices
$\OG^n = \VC^0 \supset \VC^1 \supset \ldots \supset \VC^d$ such that
$\VC^i/\VC^{i-1}$ is a locally free $R$-module of rank 1. The map
$\pi_\GG:\tilde\GG_d \to \GG_d$ is defined by 
$\pi_\GG (\VC^0\supset  \ldots \supset \VC^d) = \VC_d.$

Observe that the fibers of $\GG_d, G_{n,d}$, and $\tilde\GG_d$ over
$x^d \in Q_d$ are respectively $\Grdbar$, $G_n(\OC_d)$, and $\Grcon$.
The spaces described fit into the following diagram analogous to that of Springer theory.

\begin{equation}
\label{diag-GG}
\xymatrix{
&  \Grcon \ar@{^{(}->}[r]\ar[ld]_{\pi_\Gr} & \tilde\GG_d
\ar[ld]^{\pi_\GG}\ar[rd]_{\tilde f} & \\
\Grdbar \ar@{^{(}->}[r]\ar[rd] & \GG_d \ar[rd]^{f} & & \AM^d \ar[ld] \\
& \{x^d\} \ar@{^{(}->}[r] & Q_d &
}
\end{equation}

Similarly to above, $\pi_\Gr$ is semi-small, $\pi_\GG$ is small and both are proper.  Let $(\GG_d)_{\rs} = f^{-1}((Q_d)_{\reg})$, $(\tilde\GG_d)_{\rs} = \pi_\GG^{-1}((\GG_d)_{\rs})$, and $j_{\GG\rs}: (\GG_d)_{\rs} \to \GG_d$ be the inclusion.  The restriction $\pi_{\GG,\rs}: (\tilde\GG_d)_{\rs} \to (\GG_d)_{\rs}$ of $\pi_\GG$ is a Galois $\SG_d$-cover.  It follows that 
\[(\pi_\GG)_* \uk_{\tilde\GG_d}[nd] \cong (j_{\GG\rs})_{!*} (\pi_{\GG,\rs})_* \uk_{(\tilde\GG_d)_{\rs}} [nd].\]

\subsection{Geometric Satake}
\label{sec-geomsat}
 Mirkovi\'{c} and Vilonen~\cite{MV} prove, extending work in characteristic zero
of Lusztig~\cite{Lu2}, Beilinson-Drinfeld~\cite{BD} and Ginzburg~\cite{Gi}, that for a split
reductive group $G$ defined 
over a Noetherian commutative ring $\kb$ of finite global dimension: the category $P_{G^\vee(\OC)}(\Gr; \kb)$ of
equivariant perverse sheaves with coefficients in $\kb$ on the affine
Grassmannian for the Langlands dual group $G^\vee/\CM$, together 
with a natural convolution product $*$, is tensor equivalent to the category
of representations of $G$ over $\kb$.

In this paper, we only consider the case $G=G^\vee=GL_n$ and $\kb=k$ is a field.  We use the
following immediate corollaries of the results of~\cite{MV}.

\begin{thm}
\label{thm-geomsat}
\begin{enumerate}
\item
There is an equivalence of categories 
\[P_{G_n(\OC)}(\Grdbar;k) \cong \CC_k(n,d).\]

\item
Under this equivalence, the $GL_n$-representation
$E^{\otimes d}$ (here $E$ is the standard $n$-dimensional
representation) corresponds to the $d$-fold convolution product $(\uk_{\Gr^{\varpi_1}}[n-1])^{*d}$.  This can be expressed as the 
restriction of $\pi_{\GG*}
\uk_{\tilde{\GG}_d}[nd-d]$ to $\Grdbar$, or equivalently as the
pushforward $\pi_{\Gr*} \uk_{\Grcon}[nd-d]$.  

\item
The symmetric group $\SG_d$ acts on 
\[\pi_{\Gr*} \uk_{\Grcon}[nd-d] \cong \left( (j_{\GG,\rs})_{!*} (\pi_{\GG,\rs})_* \uk_{(\tilde\GG_d)_{\rs}} [nd-d] \right)\bigg|_{\Grdbar}  \]
by the deck transformations of $\pi_{\GG,\rs}$ and the funtoriality of $!*$-extensions.  This action corresponds under the geometric Satake equivalence to the permutation action of $\SG_d$ on $E^{\otimes d}$.
\end{enumerate}
\end{thm}

\section{A projection map and Lusztig's section}
\label{sec-proj}

In this section, we relate the spaces $\NC_d$ and
$\Grdbar$ and their quotient stacks. We do so using the functor of
points descriptions given in the previous sections.

\begin{lemma} 
\label{lem-maps}
There exist natural morphisms:
\begin{equation}
\xymatrix{
\Grdbar \ar[rd]^{\tilde\psi}\ar[d] & \NC_d
\ar[d]\ar@{_{(}->}_{\overline\phi}[l] & \GG_d
\ar[rd]^{\tilde\Psi}\ar[d] & \gg_d \ar[d]\ar@{_{(}->}_{\overline\Phi}[l]
\\ 
[\Grdbar/G_n(\OC_d)] \ar@<1ex>[r]^{\psi} & [\NC_d/G_d] \ar@{.>}[l]^{\phi}
& [\GG_d/G_{n,d}] \ar@<1ex>[r]^{\Psi} & [\gg_d/G_d]
\ar@{.>}[l]^{\Phi} 
}
\end{equation}
such that all of the solid morphisms form a commutative diagram, while the
dotted morphisms satisfy $\psi \circ \phi= \id_{[\NC_d/G_d]}$ and $\Psi \circ \Phi= \id_{[\gg_d/G_d]}$.
\end{lemma}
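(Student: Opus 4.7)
The plan is to describe all the morphisms via their functors of points, using the descriptions of the stacks $[\NC_d/G_d]$, $[\Grdbar/G_n(\OC_d)]$ and their Lie-algebra analogues recalled in Section~\ref{sec-DP}. For a commutative $\CM$-algebra $R$, an $R$-point of $[\Grdbar/G_n(\OC_d)]$ is a pair $(\FC, L)$ with $\FC$ a locally free $R[[t]]$-module of rank $n$ and $L\subset \FC$ a submodule such that $\FC/L$ is locally free of rank $d$ over $R$ (and hence annihilated by $t^d$). The map $\psi$ is then defined by $(\FC,L)\mapsto (\FC/L,\ t|_{\FC/L})$, and $\Psi$ by the same formula with $R[[t]]$ replaced by $R[t]$ and nilpotence dropped. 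The variety-level maps $\tilde\psi$ and $\tilde\Psi$ are obtained as the compositions of the vertical quotient maps with $\psi$ and $\Psi$.

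The embeddings $\overline\phi$ and $\overline\Phi$ are Lusztig's construction: fix the splitting $L_0 = \OC^{\oplus d}\oplus \OC^{\oplus(n-d)}$ of the standard lattice, and, for $x\in \NC_d(R)$, view $Q := R^d$ as an $R[[t]]$-module via the action of $x$.  Set $\overline\phi(x) := \ker(L_0\twoheadrightarrow Q)$, where the surjection is the unique $R[[t]]$-linear map sending the first $d$ standard basis vectors of $L_0$ to the standard basis of $Q$ and annihilating the remaining $n-d$.  This is visibly an $R$-point of $\Grdbar$, and injectivity on $R$-points is immediate.  The map $\overline\Phi$ is defined identically, using $R[t]$-modules in place of $R[[t]]$-modules and allowing arbitrary $x \in \gg_d(R)$.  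Commutativity of the solid diagram then reduces to the observation that $\psi \circ \overline\phi$ sends $x$ to $(Q,x)$, which is the same point of $[\NC_d/G_d]$ as $x$ viewed through the vertical quotient.

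Finally, the dotted arrows $\phi$ and $\Phi$ are defined only at the stack level by the explicit assignment
\[
\phi(Q,x) \,:=\, (\FC,L), \quad \FC \,:=\, (Q\otimes_R R[[t]])\oplus R[[t]]^{\oplus(n-d)}, \quad L \,:=\, (t-x)(Q\otimes_R R[[t]])\oplus R[[t]]^{\oplus(n-d)}.
\]
Since $Q$ is locally free of rank $d$ over $R$, the module $\FC$ is locally free of rank $n$ over $R[[t]]$, and the operator $t-x$ on $Q\otimes_R R[[t]]$ has determinant $t^d$ (as $x$ is nilpotent), hence is injective; thus $L$ is locally free of rank $n$ as well, and $\FC/L$ is canonically identified with $(Q,x)$. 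This produces a well-defined $R$-point of $[\Grdbar/G_n(\OC_d)]$, functorial in $(Q,x)$, and the identity $\psi\circ\phi = \id$ is then tautological. The map $\Phi$ is defined by the same formula with $R[t]$ in place of $R[[t]]$ and no nilpotence required: there $\det(t-x)$ is the characteristic polynomial of $x$, a monic and hence non-zero-divisor element of $R[t]$, so the same argument goes through verbatim. The main subtlety, which forces $\phi$ and $\Phi$ to exist only on stacks and not as maps of schemes, is that the constructed $\FC$ has no canonical trivialization as a free $R[[t]]$- (resp.\ $R[t]$-) module --- this is precisely the data recorded by the $G_n(\OC_d)$- and $G_{n,d}$-actions on the target.
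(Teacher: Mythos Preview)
Your proof is correct and follows essentially the same approach as the paper: all maps are described via functors of points, with $\Psi$ taking the quotient $\FC/L$ together with its $t$-action, and $\Phi$ sending $(E,a)$ to the pair $(a-t)E[t]\subset E[t]$, after which $\Psi\circ\Phi=\id$ comes down to the canonical isomorphism $E\cong E[t]/(a-t)E[t]$. The only notable difference is that you handle the case $n>d$ explicitly by adjoining a trivial summand $R[[t]]^{\oplus(n-d)}$ (and correspondingly in your description of $\overline\phi$), whereas the paper's construction of $\Phi$ produces an $F=E[t]$ of rank $d$ and so is literally written only for $n=d$, deferring the passage to general $n$ to Ng\^o's map $a_{d,n}$ in the proof of Theorem~\ref{thm-equiv}.
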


\begin{proof}
Recall diagrams \ref{diag-gg} and \ref{diag-GG}. In the lemma above,
the morphisms denoted by lower-case letters will be defined as the
restriction of the upper-case morphisms by the closed embeddings $\NC_d
\to \gg_d$ and $\Grdbar \to \GG_d$.  We thus need only describe the
global (or upper-case) morphisms and check that they satisfy the
relations described.

\subsection*{The maps $\tilde\Psi$ and $\Psi$} 
The map $\tilde\Psi$ is the forgetful map which associates to an
$R$-point $\LC$ of $\GG_d$ the rank $d$ locally free
$R$-module $\LC_0/\LC$ together with the endomorphism given by the action of
$t\in \OG$. Similarly, $\Psi$ associates to an $R$-point $(P,L \subset F)$ of $[\GG_d/G_{n,d}]$,
 the locally free $R$-module $F/L$ together with the 
endomorphism defined by multiplication by $t$.

\subsection*{The maps $\overline\Phi$ and $\Phi$} 
For each $R$-point $(a,E)$ in $[\gg_d/G_d]$, where $a$ is an
endomorphism of a rank $d$ locally free $R$-module $E$, let $\Phi(a)
\in [\GG_d/G_{n,d}]$ be the pair $(L \subset F)$, where $F := E[t]
\oplus \OG^{\oplus n-d}$ and $L := (a-t)E[t] \oplus \OG^{\oplus n-d}$.
To see that this is indeed a point of $[\GG_d/G_{n,d}]$, note that 
$a-t \in GL(E(t)) \cap \gl(E[t])$ and has determinant equal to the
characteristic polynomial of $a$ which has degree $d$ as desired.
The map $\overline\Phi$ is defined analogously.

\bigskip

It remains to exhibit a natural equivalence $\id \stackrel{\sim}{\to} \Psi \circ \Phi$.  

We claim that such an equivalence can be defined as follows. For any $(a,E)$, consider the map of $R$-modules given by 
\[E \into E[t] \to E[t]/(a-t)E[t].\]
Note that the map is injective as $E \cap (a-t)E[t] =0$. To prove surjectivity, one can use induction on the degree to show that any polynomial $p(t) \in E[t]$ is in $E + (a-t)E[t]$.  To finish the proof, we observe that this isomorphism intertwines the action of $a$ on $E$ with the action of $t$ on $E[t]/(a-t)E[t]$.
\end{proof}

\begin{remark}
The local version $\ov\phi$ was first observed by
Lusztig~\cite{lu}.  As far as I am aware, a description of the global
map $\ov\Phi$ first appeared in~\cite{MVy}. The maps in the other direction were known to some experts, but it seems the relationship to Lusztig's map had not been previously noticed.
\end{remark}

\begin{remark}
\label{rmk-embed}
  Lusztig~\cite{lu} observes that the map $\overline\phi$ is an open
  embedding when $n=d$ and the same is true of $\ov\Phi$.  Moreover,
  the image of the embedding intersects every orbit and provides a
  natural bijection between the orbits of $\Grdbar$ and those of $\NC_d$.
\end{remark}

\section{Equivalence of categories}
\label{sec-equiv}

In this section we use the Lemma \ref{lem-maps} from the
previous section to prove equivalences of categories, which we
then use to deduce Theorem~\ref{thm-main}.

\begin{thm}
\label{thm-equiv}
  The maps $\phi:\NC_d \to \Grdbar$ and $\Phi:\gg_d \to \GG_d$ induce equivalences of
  categories \[\phi^\bullet:=\phi^*[d^2-nd]:
  P_{G_n(\OC)}(\Grdbar) \stackrel{\sim}{\to} P_{G_d}(\NC_d),\]  \[
  \Phi^\bullet:=\Phi^*[d^2-nd] :
  P_{G_{n,d}}(\GG_d) \stackrel{\sim}{\to} P_{G_d}(\gg_d).\]
\end{thm}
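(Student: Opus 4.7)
The plan is to deduce that $\Phi^\bullet$ is an equivalence by exhibiting the stack morphism $\Psi$ from Lemma \ref{lem-maps} as a smooth morphism with contractible fibers of the correct relative dimension, so that its shifted pullback is an equivalence of categories of perverse sheaves, and then using $\Phi$ as a section to identify $\Phi^\bullet$ with the inverse. The statement for $\phi^\bullet$ is completely analogous, and can alternatively be deduced by base-change along the closed embedding $\{0\} \hookrightarrow \AM^{(d)}$, which specializes $\Psi$ to $\psi$ and $\Phi$ to $\phi$ while preserving the relevant geometric structure on fibers.

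First I unpack the fiber of $\Psi\colon [\GG_d/G_{n,d}] \to [\gg_d/G_d]$ over an $R$-point $(E,a)$. Using the functor-of-points description from Lemma \ref{lem-maps}, this fiber is the groupoid of triples $(F, L \subset F, \iota)$ with $F$ a rank-$n$ locally free $R[t]$-module, $L$ a sublattice, and $\iota\colon (F/L, t) \xrightarrow{\sim} (E, a)$. After trivializing $F \cong R[t]^n$, the data reduces to a surjection $R[t]^n \twoheadrightarrow E$ of $R[t]$-modules (where $E$ acquires its $R[t]$-structure from $a$); the set of such surjections is an open subscheme of $\Hom_{R[t]}(R[t]^n, E) \cong E^n \cong \AM^{nd}$. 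The formula $L = (a-t)E[t] \oplus R[t]^{n-d} \subset R[t]^n$ from the proof of Lemma \ref{lem-maps} exhibits a canonical basepoint. Together with the transitivity of the $G_{n,d}$-action on these fibers (a stack-theoretic extension of the orbit bijection of Remark \ref{rmk-embed} from the case $n = d$), this exhibits $\Psi$ as a $G_{n,d}$-equivariant affine-space bundle of relative dimension $d(n-d) = nd - d^2$.

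Granting this affine-bundle structure, $\Psi^*[nd-d^2]$ is a perverse $t$-exact fully faithful functor, and transitivity of the equivariant action on fibers forces every $G_{n,d}$-equivariant perverse sheaf on $\GG_d$ to descend uniquely along $\Psi$. Hence $\Psi^*[nd-d^2]\colon P_{G_d}(\gg_d;k) \xrightarrow{\sim} P_{G_{n,d}}(\GG_d;k)$ is an equivalence of categories. The identity $\Psi \circ \Phi = \id$ from Lemma \ref{lem-maps} yields $\Phi^\bullet \circ \Psi^*[nd-d^2] = \id$, forcing $\Phi^\bullet = \Phi^*[d^2-nd]$ to be the inverse equivalence, as desired.

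The main obstacle is verifying the affine-bundle claim for $\Psi$: one must track carefully how the choice of trivialization of $F$ is absorbed by the $G_{n,d}$-action, and confirm that the $G_{n,d}$-action on each fiber is transitive enough that the stack-theoretic fibers of $\Psi$ are genuine $\AM^{d(n-d)}$-bundles over $[\gg_d/G_d]$ rather than merely smooth with connected contractible fibers—the distinction is precisely what enables the essential-surjectivity step.
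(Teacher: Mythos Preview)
Your approach has a genuine gap in its central geometric claim. You assert that $\Psi\colon [\GG_d/G_{n,d}] \to [\gg_d/G_d]$ is an affine-space bundle of relative dimension $nd-d^2$, but this cannot be correct: as a map of stacks, $\Psi$ has relative dimension
\[
\dim[\GG_d/G_{n,d}] - \dim[\gg_d/G_d] \;=\; (dn - n^2 d) - 0 \;=\; dn(1-n),
\]
which is \emph{negative} for $n\ge 2$. So the fibers of $\Psi$ are genuinely stacky, not affine spaces. Your own analysis in fact points toward this: you show that the fiber of $\tilde\Psi$ (pulled back to $\gg_d$) is the open locus of surjections inside $E^n\cong\AM^{nd}$, and then invoke transitivity of the $G_{n,d}$-action. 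But if that action is transitive, the stack-theoretic fiber of $\Psi$ is $[\mathrm{pt}/K]$ for $K$ the stabilizer, not an affine space. The number $nd-d^2$ you obtain appears to come from subtracting $\dim G_d$ rather than $\dim G_{n,d}(P)=n^2 d$, which is what actually gets quotiented out. One could try to salvage the strategy by proving that $K$ is unipotent (so that $\Psi^*$ is an equivalence on equivariant perverse sheaves), but you have not done this, and it is substantially more delicate than what is needed.

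The paper's argument sidesteps this entirely by analyzing $\Phi$ rather than $\Psi$. Because $\overline\Phi$ is an open embedding when $n=d$ (Remark~\ref{rmk-embed}) and factors through this open embedding composed with Ng\^o's smooth-with-connected-fibers map $a_{d,n}$ when $n>d$, one gets directly that $\Phi$ is smooth with geometrically connected fibers. By \cite[4.2.5]{BBD} this makes $\Phi^\bullet$ fully faithful. Essential surjectivity then falls out of the relation $\Psi\circ\Phi=\id$ from Lemma~\ref{lem-maps}: for any perverse $A$ one has $A\cong\Phi^\bullet(\Psi^*A[nd-d^2])$, and since $\Phi^\bullet$ is $t$-exact and faithful it reflects perversity, so the object in parentheses is perverse. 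No geometric analysis of $\Psi$ is required at all---only the formal identity $\Phi^*\Psi^*=\id$. This is both easier and more robust than attempting to prove $\Psi^*$ is an equivalence in isolation.
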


\begin{proof}
By~\cite[4.2.5]{BBD}, the (shifted) pull-back along a smooth morphism $f:X \to Y$
with connected fibers is a fully faithful embedding of the
category of perverse sheaves on $Y$ to that on $X$.  

We now check that $\Phi$ enjoys these properties (and therefore
$\phi$ does as well by base change). From this it will follow that
$\Phi^\bullet$ and $\phi^\bullet$ are fully faithful.  

First consider the case $n=d$. Recall that $\overline\phi$ and $\overline\Phi$ are open
morphisms (Remark \ref{rmk-embed}).  From this, we can deduce that $\phi$ and $\Phi$ are smooth. 

By Lemma~\ref{lem-maps}, $\Psi \circ \Phi = \id_{[\gg_d/G_d]}$ and
thus the fibers of $\Phi$ are connected as long as they are nonempty.
To see that the fibers are non-empty, note that every $G_{n,d}$-orbit
of $\GG_d$ has non-trivial intersection with $\gg_d$.

In the general case $n>d$, we can factor $\tilde\Phi_d^n$ as
$\tilde\Phi_d^d$ and the map $a_{d,n}:[\GG_d/G_{d,d}] \to
[\GG_n/G_{n,d}]$. In \cite[Lemme 2.2.1]{Ngo}, it is shown that
$a_{d,n}$ is smooth with connected fibers.

By Lemma~\ref{lem-maps}, the compositions $\phi^* \circ \psi^*$ and
$\Phi^* \circ \Psi^*$ are the identity functors on $P_{G_d}(\NC_d)$ and
$P_{G_{n,d}}(\GG_d)$ respectively. We conclude that
$\phi^\bullet$ and $\Phi^\bullet$ are also essentially surjective,
which completes the proof.
\end{proof}

We can now prove Theorem~\ref{thm-main}:

\begin{proof}
Composing the equivalence from Theorem~\ref{thm-geomsat} and the first
equivalence from Theorem~\ref{thm-equiv} give the desired result.
\end{proof}

We conclude this section with an observation that we will not use in what
follows, but which puts in perspective the relationship between the various
equivariant derived categories.

\begin{cor}
  The pullback functor on equivariant derived categories
\[\psi^*: D_{G_d}(\NC_d) \to D_{G_n(\OC)}(\Grdbar)\] 
(resp. $\Psi^*: D_{G_d}(\gg_d) \to D_{G_{n,d}}(\GG_d)$)
  splits the functor 
\[\phi^*:D_{G_n(\OC)}(\Grdbar) \to D_{G_d}(\NC_d)\]
 (resp. $\Phi^*:
  D_{G_{n,d}}(\GG_d) \to D_{G_d}(\gg_d)$), in the following
  sense:

For any two objects, $A,B \in D_{G_d}(\NC_d)$, $\psi^*$ induces an injection
of graded vector spaces 
\[\Ext^*_{D_{G_d}(\NC_{d})}(A,B) \into \Ext^*_{D_{G_n(\OC)}(\Grdbar)}(\psi^*A,\psi^*B),\]
which naturally splits the projection map given by $\phi^*$.
\end{cor}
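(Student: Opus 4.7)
The entire statement will follow formally from the identity $\psi \circ \phi = \mathrm{id}_{[\NC_d/G_d]}$ (and $\Psi \circ \Phi = \mathrm{id}_{[\gg_d/G_d]}$) established in Lemma \ref{lem-maps}. The first step is to pass from these equalities of morphisms of stacks to a natural isomorphism of endofunctors on the equivariant derived categories:
\[
\phi^* \circ \psi^* \;\cong\; (\psi \circ \phi)^* \;=\; \mathrm{id}_{D_{G_d}(\NC_d)},
\]
and analogously $\Phi^* \circ \Psi^* \cong \mathrm{id}_{D_{G_d}(\gg_d)}$. This is immediate from the pseudo-functoriality of $*$-pullback on equivariant derived categories, which turns compositions of morphisms of stacks into compositions of functors up to a coherent natural isomorphism.

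The Ext statement is then a purely formal consequence. Fix $A, B \in D_{G_d}(\NC_d)$ and $i \in \ZM$. Functoriality of $\psi^*$ and $\phi^*$ on morphisms supplies the chain
\[
\Hom(A, B[i]) \xrightarrow{\psi^*} \Hom(\psi^* A, \psi^* B[i]) \xrightarrow{\phi^*} \Hom(\phi^* \psi^* A, \phi^* \psi^* B[i]) \xrightarrow{\sim} \Hom(A, B[i]),
\]
where the last arrow uses the natural isomorphism $\phi^* \psi^* \cong \mathrm{id}$ from the first step. The composite is the identity by the compatibility of the natural isomorphism with the 2-functoriality of pullback, which is the assertion that for any morphism $f : A \to B[i]$ the endomorphism $\phi^* \psi^*(f)$ of the pullback corresponds to $f$ itself under the unit isomorphism. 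In particular $\psi^*$ is injective on $\Ext^*$ and is naturally split by $\phi^*$; the $(\Psi^*, \Phi^*)$ case is identical.

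The only point that requires any care is the last compatibility: that the chosen natural isomorphism $\mathrm{id} \xrightarrow{\sim} \phi^* \psi^*$ intertwines $f$ with $\phi^* \psi^*(f)$. This is standard for pullback of (equivariant) complexes, where pullback along the identity morphism is naturally isomorphic to the identity functor via the obvious isomorphism, and composition of pullbacks satisfies a coherence diagram with this identification. I would simply invoke this coherence and not attempt to rederive it; there is no geometric content beyond what is already captured in Lemma \ref{lem-maps}.
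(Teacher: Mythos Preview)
Your proposal is correct and is precisely the intended argument: the paper states this as a corollary without proof, and the content is exactly the formal consequence of the identity $\psi\circ\phi=\mathrm{id}$ from Lemma~\ref{lem-maps} that you spell out. There is nothing more to add.
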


\begin{remark} 
It is worth noting that as functors between equivariant derived
categories, the pullback functors do not induce equivalences, and in
fact the various categories are not equivalent.  To see this, consider
the $\Ext$-groups between the $!$-- and $*$--extension of the constant
sheaf on the stratum associated to a partition $\lambda$.  The $\Ext$
groups in the equivariant derived categories will agree with the
equivariant cohomology of the stratum, which in turn agrees with the
group cohomology (of the reductive part) of the stabilizer of a point.
Unless $n=d$ and $\lambda$ is the trivial partition, these will not
agree.
\end{remark}

\section{Generalized Schur-Weyl Theorem}
\label{sec-SWthm}

In this section we prove Theorem~\ref{thm-SchurWeyl}.

\begin{proof}
Recall the following consequences of the geometric Satake equivalence summarized in Theorem \ref{thm-geomsat}.
There is natural isomorphism
\[ \End_{G_n}(E^{\otimes d}) \cong \End_{P_{G_n(\OC)}(\Gr)}(\pi_{\Gr *}\uk_{\Grcon}[nd-d]).\]
The perverse sheaf $\pi_{\Gr *}\uk_{\Grcon}[nd-d] \cong \left(
  (j_{\GG\rs})_{!*} (\pi_{\GG\rs})_* \uk_{\tilde\GG_{\rs}} \right)|_\Gr[nd-d]$ carries an action of $\SG_d$ induced by the action of the deck-transformations of $\pi_{\GG\rs}$, and this action agrees under the isomorphism above with the permutation action of $\SG_d$ on $E^{\otimes d}$.

We wish to translate this action from the affine Grassmannian to the nilpotent cone. A simple
generalization of the maps $\ov\Phi$ and $\ov\phi$ completes the following commutative cube:
\begin{equation}
\xymatrix{
& \tilde\NC_d \ar[r]\ar[dl]\ar[dr] & \tilde\gg_d \ar[dl]\ar[dr] & \\ 
\NC_d \ar[dr]\ar[r]^i & \gg_d \ar[dr] & \Grcon \ar[dl]\ar[r] & \wt\GG_d \ar[dl] \\
& \Grdbar \ar[r] & \GG_d &   \\
}
\end{equation}
Here the square involving $\NC$'s and $\Gr$'s and the square involving
$\gg$'s and $\GG$'s are both pull-back squares, all the
maps from upper-right to lower-left are proper, and the maps from upper-left to lower-right are inclusions.

By Theorem \ref{thm-equiv}, the functor $\phi^\bullet$ induces an isomorphism
\[ \End_{P_{G_n(\OC)}(\Gr)}(\pi_{\Gr *}\uk_{\Grcon}[nd-d]) \cong \End_{P_{G_d}(\NC_d)}(\SC).\]
The $\SG_d$-action on $\pi_{\Gr *}\uk_{\Grcon}[nd-d]$ corresponds to
the analogously-defined action on $\SC \cong i^*(j_\rs)_{!*}
\pi_{\rs*} \uk_{\tilde\gg_\rs}[d^2-d]$.

On the other hand, Juteau \cite{Ju-thesis} observes that, as in characteristic 0, the
Fourier transform exchanges the Springer and Grothendieck sheaves and
so 
\[ \End(\SC) \stackrel{\TM}{\cong} \End(\GC) \cong \End(\pi_{\rs*} \uk_{\tilde\gg_\rs}[d^2]) \cong k\SG_d.\]
 Proposition \ref{prop-signs} says that the resulting action of $\SG_d$ by
Fourier transform differs from the one arising by restriction (as in
the previous paragraph) by a sign character.  But as the first induces
an isomorphism, the second does as well.
\end{proof}

\section{Geometric Schur Functor}
\label{sec-GSF}

The previous section allows us to identify the functor $\Hom(\SC,-)$ with the Schur functor.  In other words, we have a
commutative diagram of functors:
\begin{equation}
\xymatrix{
P_{G_d}(\NC_d) \ar[rr]^{\Hom(\SC,-)} \ar[d]^\cong &&  \End(\SC) \Mod  \\
C_k(n,d) \ar[rr]^{\Hom(E^{\otimes d},-)} && k\SG_d \Mod \ar[u]_\cong ,
}
\end{equation}
where the action of $\SG_d$ on $\SC$ is through the action by deck transformations on $\GC$ and the identification of $\SC$ with $i^* \GC[-d]$.

Motivated by this connection, we will shift our focus to a general
connected complex reductive group $G$. By analogy, we can define a
\emph{Schur functor} from the category of $G$-equivariant perverse
sheaves on the nilpotent cone in $\gg$ to representations of the Weyl
group,
\[ \Hom(\SC,-): P_G(\NC_G;k) \to \End(\SC) \Mod.\]

In this section, we will introduce another functor that is closely
related to $\Hom(\SC,-)$.  We propose that this new functor should be
viewed as a geometric Schur functor for $G$.

To ease notation, we will let $j=j_{\rs}$.
We consider the functor:
\[\FC := j^* \circ \TM \circ i_* : P_{G}(\NC) \to P_{G}(\gg_{\rs}).\]

Note that it is exact because each component is exact.  Let
$\Loc_W(\gg_\rs) \subset P_G(\gg_\rs)$ denote the full subcategory of
local systems with monodromy factoring through the Weyl group shifted
to be perverse.

In the proof of the following lemma we will use parabolic restriction
and induction functors.  For their definition and some basic
properties in this context, see for example \cite{AchMau}.

\begin{lemma}
\label{lem:locW}
The functor $\FC$ factors through the inclusion $\Loc_W(\gg_\rs) \subset P_G(\gg_\rs)$.
\end{lemma}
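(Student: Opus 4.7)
The plan is to use parabolic restriction to the maximal torus together with its compatibility with the Fourier--Sato transform. Let $\res_T^G : D_G(\gg) \to D_T(\hg)$ be the parabolic restriction functor, defined via the correspondence $\gg \xleftarrow{p} \bg \xrightarrow{q} \hg$. Two properties will be used: first, it commutes (up to a cohomological shift) with the Fourier--Sato transform, $\TM_\hg \circ \res_T^G \cong \res_T^G \circ \TM_\gg$; second, being defined by a base change, it commutes with restriction to the regular semisimple loci, $j_\hg^* \circ \res_T^G \cong \res_T^G \circ j^*$, where $j_\hg : \hg_\rs \hookrightarrow \hg$ is the inclusion. I would invoke the first property as a black box (cf.~\cite{AchMau}); the second is immediate.

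First, I would compute $\res_T^G(i_*A)$ for $A \in P_G(\NC)$. Since $i_*A$ is supported on $\NC$, its pullback to $\bg$ is supported on $\bg \cap \NC = \ng$, which is collapsed to $0 \in \hg$ by $q$. Hence $\res_T^G(i_*A)$ is a perverse sheaf on $\hg$ supported at the origin, i.e., a shifted skyscraper $V_0[\dim \hg]$ for some $k$-module $V$. Its Fourier--Sato transform is the shifted constant perverse sheaf $\underline{V}_\hg[\dim \hg]$. Combining with the two commutations,
\[
\res_T^G \FC(A) \;\cong\; j_\hg^* \TM_\hg \res_T^G(i_*A) \;\cong\; \underline{V}_{\hg_\rs}[\dim \hg],
\]
a shifted constant local system on $\hg_\rs$.

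Second, I would deduce that $\FC(A)$ itself is a local system on $\gg_\rs$ with monodromy factoring through $W$. The tool is the identification of $G$-equivariant perverse sheaves on $\gg_\rs$ with $W$-equivariant perverse sheaves on $\hg_\rs$, arising from the $W$-Galois cover $\tilde\gg_\rs \cong G \times^T \hg_\rs \to \gg_\rs$. Under this equivalence, $\res_T^G$ corresponds, up to a shift from the contractible fiber $\ng$ of $\bg_\rs \to \hg_\rs$, to restriction along $\hg_\rs \hookrightarrow \gg_\rs$. Thus the constancy of $\res_T^G \FC(A)$ on $\hg_\rs$ forces $\FC(A)|_{\hg_\rs}$ to be a constant local system, and hence $\FC(A)$ to be a $G$-equivariant local system on $\gg_\rs$ whose monodromy, viewed as a representation of the braid group $\pi_1(\gg_\rs) = \tilde W$, is trivial on the pure braid group subgroup $\pi_1(\hg_\rs) = P$. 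In view of the short exact sequence $1 \to P \to \tilde W \to W \to 1$ (with the quotient $W$ coming from the Grothendieck cover $\tilde\gg_\rs \to \gg_\rs$), the monodromy factors through $W$, proving $\FC(A) \in \Loc_W(\gg_\rs)$.

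I expect the main obstacle to be this last descent step: transferring the constancy of $\res_T^G \FC(A)$ on $\hg_\rs$ into the $W$-monodromy of $\FC(A)$ on $\gg_\rs$. Care is required because $\res_T^G$ is not literally restriction but differs by the cohomology of the unipotent fiber, and because the equivalence with $W$-equivariant sheaves on $\hg_\rs$ must be made precise for $G$-equivariant perverse sheaves rather than just for local systems, using the identification $\tilde\gg_\rs \cong G \times^T \hg_\rs$ and the simple-connectedness of $G/T$.
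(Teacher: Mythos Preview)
Your argument is correct and takes a genuinely different route from the paper. The paper proceeds by a two-step case analysis: first it treats simple objects, distinguishing those in the modular Springer correspondence (where $\TM A$ is already an $\ic$-extension of an object of $\Loc_W$) from those with nontrivial cuspidal support (where Mirkovi\'c's support estimate forces $j_\rs^*\TM i_*A=0$); then for a general $X$ it passes to a minimal quotient covered by copies of the Springer sheaf and uses exactness of $\FC$ to conclude. Your approach bypasses both the cuspidal theory and the reduction to simples: the single computation $\res_T^G i_* A$ is supported at $0\in\hg$, combined with the commutation of $\res_T^G$ with $\TM$ and with open restriction, yields directly that the image of $\FC$ restricts to a constant sheaf on $\hg_\rs$, and the equivalence $P_G(\gg_\rs)\simeq P_W(\hg_\rs)$ then forces the monodromy to factor through $W$. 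This is cleaner and more uniform; the paper's argument, on the other hand, yields the extra structural information that $\FC$ annihilates exactly the simples outside the Springer correspondence, which feeds into the later discussion.

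Two minor points worth tightening. First, your shift on the skyscraper is off: a perverse sheaf on $\hg$ supported at $0$ is $i_{0*}V$ in degree $0$, not $V_0[\dim\hg]$; its Fourier--Sato transform is then the constant perverse sheaf $\underline V_\hg[\dim\hg]$, as you wrote. Second, asserting that $\res_T^G(i_*A)$ is perverse uses the $t$-exactness of parabolic restriction on $G$-equivariant objects (as in \cite{AchMau}); alternatively you can avoid this by noting that any complex supported at $0$ has constant Fourier transform, which suffices for the conclusion. The descent step you flagged as the main obstacle is fine: once you know $\bg_\rs\to\hg_\rs$ is a $U$-equivariant affine bundle with $U$ acting transitively on fibers, $\res_T^G$ on $\gg_\rs$ is literally restriction to $\hg_\rs$ up to a shift, and under $\gg_\rs\cong G\times^{N(T)}\hg_\rs$ this identifies (non-equivariantly) with the underlying sheaf of the corresponding object of $P_W(\hg_\rs)$.
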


\begin{proof}
We first show that the statement is true for all simple objects in
$P_G(\NC;k)$.  For each nilpotent orbit $\OC$ and irreducible local
system $\LC \in P_G(\OC;k)$, consider the $\ic$-sheaf $A=
\ic(\OC,\LC)$.  Let $T$ be a maximal torus for $G$.  Note that
$\res_T^G A \neq 0$ if and only if $A$ is in Juteau's modular Springer
correspondence and then $\TM A \cong \ic(\gg_\rs, \LC_V)$, where
$\LC_V \in \Loc_W(\gg_\rs)$ and corresponds to some irreducible
$k[W]$-representation $V$.

Suppose instead that $\res_T^G A = 0$.  Then there exists some Levi
subgroup $L$ such that $A_0 = \resLG A$ is non-zero but parabolic
restriction of $A_0$ for any proper Levi subgroup of $L$ vanishes.  It
follows that the same is true of $\TM A_0$ and
by~\cite[Lemma~4.4]{mirkovic}\footnote{While Mirkovi\'{c} works with
  $\DC$-modules, the same argument works in the constructible context
  with arbitrary coefficients.}, that $\TM A_0$ is supported on $\NC_L
\times Z(\lg)$.  By adjunction, there exists a non-zero map $A \to
\indLG A_0$.  As $A$ is simple, 
\[\overline{\supp \TM A} \subset \overline{\indLG \TM A_0}\subset {}^G(\NC_L \times Z(\lg)).\]
Finally, ${}^G(\NC_L \times Z(\lg)) \cap \gg_{\rs} = \emptyset$ as $L$
is not a maximal torus.  Thus $j^* \TM A =0$.

As $\FC$ is exact, we conclude that $\FC(X)$
is a local system on $\gg_\rs$ for any $X\in P_G(\NC)$.  It remains to
check that $\FC X$ has monodromy that factors through $W$.

Because we have assumed that $k$ is a field, the category of
perverse sheaves is Artinian.  Thus there exists a minimal $Q \subset
X$ such that $\FC(X/Q)=0$.  By the exactness of $\FC$, $\FC(Q)
\cong \FC(X)$.  By the minimality of $Q$, the top of $Q$ is a direct
sum of $\ic$-sheaves in Juteau's modular Springer correspondence.
Recall that the Springer sheaf is a direct sum of the projective
covers of such $\ic$-sheaves.  Thus for some $m \geq 0$, there exists
a surjection $\SC^{\oplus m} \to Q$.  Using again the exactness of
$\FC$, we conclude that $\FC (Q)$ is a quotient of $m$ copies of
$j^*\GC$ and thus has monodromy that factors through the Weyl group.
\end{proof}

In order to compare $\FC$ with $\Hom(\SC,-)$, we consider the functor 
\[ \rho: \End(j^*\GC) \Mod \to \Loc_{W}(\gg_{\rs}) \]
defined by the tensor product $(-) \otimes_{\End(j^* \GC)} j^*\GC$ and its inverse 
 \[\Hom(j^*\GC, -): \Loc_W (\gg_\rs) \to \End(j^*\GC) \Mod.\]

\begin{thm}
\label{thm-schurgeom}
There is a natural equivalence of functors:
\begin{equation}
\xymatrix{
P_{G}(\NC) \ar[rr]^{\FC} \ar[rd]_{\Hom(\SC,-)} &&
  \Loc_{W}(\gg_{\rs})\\
& \End(\SC) \Mod \cong \End(j^*\GC) \Mod \ar[ru]_{\rho}^\sim &
}
\end{equation}
\end{thm}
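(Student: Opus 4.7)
The plan is to unwind the equivalence $\rho$ and establish the theorem in the equivalent form of a natural isomorphism $\Hom(\SC,-) \cong \Hom(j^*\GC, \FC(-))$ between functors $P_G(\NC;k) \to \Mod-k[W]$, under the identifications $\End(\SC) \cong \End(j^*\GC) \cong k[W]$. The candidate natural transformation $\alpha$ is pulled directly from the Fourier-Sato equivalence: a morphism $f \colon \SC \to X$ in $P_G(\NC;k)$ pushes forward under the fully faithful $i_*$ and transforms under $\TM_\gg$ to a morphism $\GC \cong \TM_\gg i_* \SC \to \TM_\gg i_* X$, whose restriction to $\gg_\rs$ produces an element of $\Hom(j^*\GC, \FC(X))$. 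That the resulting map $\alpha_X$ is $k[W]$-equivariant will require Proposition~\ref{prop-signs}, which identifies the Fourier-induced $W$-action on $\SC$ with the restriction-induced action up to the sign character, the sign being absorbed into the identification of the two endomorphism algebras with $k[W]$.

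The key structural reduction is that both sides are exact functors of $X$. The functor $\Hom(\SC,-)$ is exact because, as noted in the proof of Lemma~\ref{lem:locW}, $\SC$ is a direct sum of the projective covers of the simples in Juteau's modular Springer correspondence, hence projective in $P_G(\NC;k)$. The composition $\Hom(j^*\GC, \FC(-)) = \rho^{-1} \circ \FC$ is exact because $\FC$ is a composition of exact functors and $\rho$ is an equivalence. Since $P_G(\NC;k)$ is a finite-length abelian category, d\'evissage on composition length reduces the proof to checking that $\alpha_X$ is an isomorphism on every simple $X = \ic(\OC, \LC)$.

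For the simples there are two cases. If $X$ lies in Juteau's modular Springer correspondence and corresponds to an irreducible $k[W]$-representation $V$, then by construction $\TM_\gg i_* X = \ic(\gg, \LC_V)$, so $\FC(X) = \LC_V$ and $\rho^{-1}(\FC X) = V$; the decomposition of $\SC$ into projective covers gives likewise $\Hom(\SC, X) \cong V$, so both sides of $\alpha_X$ are the simple $k[W]$-module $V$. If instead $X$ is not in the correspondence, the argument already used in the proof of Lemma~\ref{lem:locW} yields $\FC X = 0$, while $\Hom(\SC, X) = 0$ because any nonzero map $P_L \to X$ from a projective summand $P_L$ of $\SC$ would have image the simple top of $P_L$, forcing $X = L$ contrary to assumption.

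The main obstacle, I expect, is verifying that in the first case $\alpha_X$ itself is an isomorphism, rather than merely that both sides are abstractly isomorphic to $V$. A Schur-type argument reduces this to showing $\alpha_X$ is nonzero, which I plan to establish by tracing a canonical surjection $\SC \twoheadrightarrow \ic_V$: its Fourier transform is a nonzero morphism $\GC \to \ic(\gg, \LC_V)$, and since the target is simple the image must be all of $\ic(\gg, \LC_V)$, so the further restriction to $\gg_\rs$ is surjective onto $\LC_V$ and in particular nonzero. The other technical obligation throughout, namely the consistent identification of the two $k[W]$-actions across the Fourier transform, is handled uniformly by Proposition~\ref{prop-signs}.
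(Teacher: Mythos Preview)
Your proposal is correct, but it takes a longer route than the paper. You construct the natural transformation $\alpha_X\colon \Hom(\SC,X)\to\Hom(j^*\GC,\FC X)$, $f\mapsto j^*\TM i_*f$, and then verify it is an isomorphism by d\'evissage on simples, splitting into the Springer and non-Springer cases and invoking Schur's lemma. The paper constructs exactly the same map but observes that it is an isomorphism \emph{directly}, for every $X$, with no case analysis: the Fourier step $\Hom(\SC,X)\cong\Hom(\GC,\TM i_*X)$ is an equivalence, and the restriction step $\Hom(\GC,Y)\to\Hom(j^*\GC,j^*Y)$ is an isomorphism for every perverse $Y$ because $\GC$, being an intermediate extension from $\gg_\rs$, has no nonzero maps to objects supported on $\gg\setminus\gg_\rs$ (one uses the exact sequence $0\to K\to Y\to\leftexp{p}{j_*}j^*Y\to C\to 0$ with $K,C$ supported off $\gg_\rs$ and applies the exact functor $\Hom(\GC,-)$). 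Composing with $\rho\circ\rho^{-1}=\id$ on $\Loc_W(\gg_\rs)$ finishes the argument in three lines.

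What your approach buys is an explicit picture of what happens on each simple, at the cost of needing the Morita-type input that $\SC$ is a projective generator of the Serre subcategory spanned by the Springer simples (so that $\Hom(\SC,\ic_V)$ is itself simple). Note that you do not actually need the stronger assertion $\Hom(\SC,\ic_V)\cong V$ a~priori: once you know $\alpha_X$ is a nonzero $k[W]$-map between simple modules, Schur forces it to be an isomorphism and the identification with $V$ follows. Also, your invocation of Proposition~\ref{prop-signs} is not really needed here: the theorem's identification $\End(\SC)\cong\End(j^*\GC)$ is the one induced by Fourier and then restriction to $\gg_\rs$, and with that identification $\alpha$ is tautologically equivariant; the sign only enters if you separately identify both sides with $k[W]$ via the deck-transformation actions.
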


\begin{proof}
We proceed by constructing a sequence of equivalences. Fourier transform induces a equivalence of functors:
\[\rho(\Hom(\SC,-)) \cong \rho(\Hom(\GC,\TM i_*(-))).\]

For any $X \in P_G(\NC)$, the Fourier transform $\TM X$ is perverse and so we obtain an exact sequence:
\[0\to K \to \TM X \to \p j_*j^* \TM X \to C \to 0\]
where $K$ and $C$ have support in $\gg - \gg_\rs$.  Applying the exact functor $\Hom(\GC,-)$ gives an isomorphism $\Hom(\GC,\TM X) \cong \Hom(\GC, \p j_*j^* \TM X)$.  Using the adjunction between $j^*$ and $\p j_*$ we obtain an equivalence:
\[ \rho(\Hom(\GC,\TM i_*(-))) \cong \rho(\Hom(j^* \GC, j^* \TM i_*(-))). \]

The composition $\rho \circ \Hom(j^*\GC,-)$ restricted to $\Loc_W(\gg_\rs)$ is isomorphic to the identity functor. From Lemma \ref{lem:locW}, the functor $j^* \TM i_*$ takes values in $\Loc_W(\gg_\rs)$ and so we may conclude:
\[\rho(\Hom(j^* \GC, j^* \TM i_*(-))) \cong j^* \TM i_*(-) = \FC(-).\]
\end{proof}

\section{Geometric Adjoint Functors}
\label{sec-GAF}

In the previous section we introduced the notion of a geometric Schur functor for an arbitrary connected complex reductive group.  We close with a simple observation in this general context.

By the usual adjointness for closed and open embeddings, the geometric Schur functor has the following left and right adjoints which we consider restricted to $\Loc_W(\gg_\rs)$:
\[\GC_R = \p i^! \circ \TMp \circ \p j_* :\Loc_W(\gg_\rs) \to P_G(\NC),\]
\[\GC_L = \p i^* \circ \TMp \circ \p j_! : \Loc_W(\gg_\rs) \to P_G(\NC).\]

\begin{lemma}
\label{lem-antiorb}
The functor $\TMp j_{!*}: \Loc_W(\gg_{\rs}) \to P_{G}(\gg)$ factors through the subcategory $P_{G}(\NC)
\subset P_{G}(\gg)$.
\end{lemma}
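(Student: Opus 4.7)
The plan is to reduce the lemma to two facts already established in the excerpt: Juteau's identification $\TMp_\gg \GC \cong \SC$, and the observation that every object of $\Loc_W(\gg_\rs)$ admits an epimorphism from a power of the regular $W$-local system $\LC_{reg} := \pi_{\rs!} \ic_{\tilde\gg_\rs}$. Under the equivalence $\Loc_W(\gg_\rs) \simeq \Rep_k W$, the local system $\LC_{reg}$ corresponds to the regular representation $k[W]$; since every finitely generated $k[W]$-module is a quotient of a free module $k[W]^n$, every $\LC \in \Loc_W(\gg_\rs)$ admits a surjection $\LC_{reg}^n \twoheadrightarrow \LC$.

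I would then push this epimorphism forward through $j_{!*}$ and $\TMp_\gg$. The smallness of $\pi$ yields $\GC = \pi_* \ic_{\tilde\gg} \cong j_{!*} \LC_{reg}$; applying the intermediate extension functor $j_{!*}$---which preserves epimorphisms, being the image in $P_G(\gg)$ of the right-exact functor $\p j_!$---transforms the surjection above into an epimorphism $\GC^n \twoheadrightarrow j_{!*}\LC$ in $P_G(\gg)$. Applying the exact Fourier--Sato functor $\TMp_\gg$ and invoking $\TMp_\gg \GC \cong \SC$ (a rephrasing of Juteau's proposition) then produces an epimorphism $\SC^n \twoheadrightarrow \TMp_\gg j_{!*}\LC$. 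Since $\SC$ is supported on $\NC$ and the subcategory $P_G(\NC) \subset P_G(\gg)$ is closed under quotients, $\TMp_\gg j_{!*}\LC$ lies in $P_G(\NC)$, completing the argument.

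The only non-formal input is the assertion that $j_{!*}$ preserves epimorphisms along an open embedding. Given $\alpha: \LC \twoheadrightarrow \LC'$ in $\Perv(\gg_\rs)$, right-exactness of $\p j_!$ makes $\p j_!(\alpha)$ an epimorphism; composing with the canonical quotient map $\p j_! \LC' \twoheadrightarrow j_{!*}\LC'$ and using the commuting square with $j_{!*}(\alpha)$, one sees that the composite $\p j_! \LC \twoheadrightarrow j_{!*}\LC \to j_{!*}\LC'$ is surjective, which together with the surjectivity of $\p j_! \LC \twoheadrightarrow j_{!*}\LC$ forces $j_{!*}(\alpha)$ to be an epimorphism. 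All remaining steps are formal consequences of Juteau's proposition and the $t$-exactness of the Fourier--Sato transform.
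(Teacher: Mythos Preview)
Your proposal is correct and follows essentially the same route as the paper's proof. The paper compresses the argument to one line---``it suffices to check on a projective generator of $\Loc_W(\gg_\rs)$,'' namely $j^*\GC$---whereas you unpack what makes that reduction legitimate: since $\TMp j_{!*}$ is not exact, one needs the observation that $j_{!*}$ preserves epimorphisms (which you supply) together with the exactness of $\TMp$ to pass from the projective generator to an arbitrary object.
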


In other words, the perverse extension of a local system associated to
a representation of the Weyl group is \emph{anti-orbital}.

\begin{proof}
Since $P_G(\NC)$ is a thick subcategory of $P_G(\gg)$ and $j_{!*}$
preserves epimorphisms, it suffices to check on a projective generator
of $\Loc_W(\gg_\rs)$.  The local system $j^*\GC$ is such a projective
generator and $j_{!*}j^*\GC \cong \GC$.  Thus $\TMp j_{!*} j^*\GC
\cong i_* \SC$, which has support on $\NC$.
\end{proof}

\begin{prop}
The adjoint functors $\GC_L,\GC_R$ are both right inverses to $\FC$.
\end{prop}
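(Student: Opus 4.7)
The plan is to verify that for every $L \in \Loc_W(\gg_\rs)$ both the adjunction counit $\FC \GC_R L \to L$ and the adjunction unit $L \to \FC \GC_L L$ are isomorphisms. I will do this by sandwiching $\FC \GC_R L$ (resp.\ $\FC \GC_L L$) between two copies of $L$ whose composite is the identity of $L$, with Lemma~\ref{lem-antiorb} supplying the essential geometric input.

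For $\GC_R$, set $F := \TMp \p j_* L$, which is perverse on $\gg$. The standard recollement identifies $i_* \p i^! F \hookrightarrow F$ with the largest subobject of $F$ supported on $\NC$. Applying the exact equivalence $\TMp$ to the canonical sequence
\[ 0 \to j_{!*} L \to \p j_* L \to C \to 0, \]
in which $C$ is supported on $\gg - \gg_\rs$, gives $0 \to \TMp j_{!*} L \to F \to \TMp C \to 0$; by Lemma~\ref{lem-antiorb} the first term has the form $i_* X$ for a unique $X \in P_G(\NC)$. Universality of $i_* \p i^! F$ then yields a chain $i_* X \hookrightarrow i_* \p i^! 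F \hookrightarrow F$. Applying the exact functor $j^* \circ \TM$, and using $\TM \TMp = \id$ together with $j^* j_{!*} = j^* \p j_* = \id_{\Loc_W(\gg_\rs)}$ (both valid because $j$ is an open immersion and $j^*$ is $t$-exact), I arrive at
\[ L \;\hookrightarrow\; \FC \GC_R L \;\hookrightarrow\; L \]
whose composite is the identity of $L$. A short diagram chase then forces each arrow to be an isomorphism.

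The argument for $\GC_L$ is formally dual. With $F' := \TMp \p j_! L$, the unit $F' \twoheadrightarrow i_* \p i^* F'$ is the universal quotient of $F'$ supported on $\NC$. Combining the sequence $0 \to K \to \p j_! L \to j_{!*} L \to 0$ (with $K$ supported on $\gg - \gg_\rs$) with Lemma~\ref{lem-antiorb} produces a surjection $F' \twoheadrightarrow i_* X$, which factors uniquely as $F' \twoheadrightarrow i_* \p i^* F' \twoheadrightarrow i_* X$. Applying $j^* \circ \TM$ yields
\[ L \;\twoheadrightarrow\; \FC \GC_L L \;\twoheadrightarrow\; L \]
with composite the identity, forcing both arrows to be isomorphisms.

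The real content is Lemma~\ref{lem-antiorb}; everything else is a formal manipulation of recollement triangles and the $t$-exactness properties of the closed/open pair $(\NC, \gg_\rs)$. The subtle point to keep in mind is that $\TMp \p j_* L$ and $\TMp \p j_! L$ are typically not themselves supported on $\NC$ --- only the middle extension $\TMp j_{!*} L$ is --- so the sandwich is needed to confine the discrepancy to pieces that are annihilated after restriction to $\gg_\rs$.
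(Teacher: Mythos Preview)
Your proof is correct and follows essentially the same route as the paper: both arguments sandwich $\FC\GC_R L$ between two copies of $L$ by factoring the inclusion $\TMp j_{!*}L \hookrightarrow \TMp\,{}^p j_* L$ through $i_*\,{}^p i^! (\TMp\,{}^p j_* L)$ via Lemma~\ref{lem-antiorb}, then apply $j^*\TM$ and observe that the outer composite is the identity. Your write-up is slightly more explicit (naming the exact sequences and spelling out the dual $\GC_L$ case), but the strategy and the key input are identical to the paper's.
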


\begin{proof}
We provide the proof for $\GC_R$, the proof for $\GC_L$ is
completely analogous.

We seek a natural equivalence $j^* \TM i_* \p i^! \TM \p j_*
\cong \id$.   Consider an element $\LC \in \Loc_W(\gg_\rs)$. One has the canonical inclusion $j_{!*}\LC \hookrightarrow \p j_* \LC$.
The Fourier transform $\TMp$ is an equivalence of categories, so
$\TMp j_{!*}\LC \hookrightarrow \TMp \p j_* \LC$.  By lemma
\ref{lem-antiorb}, the $\TMp j_{!*}\LC$ is supported on the nilpotent
cone $\NC$.
On the other hand, for any $\FC\in P(\gg)$, $i_* \p
i^! \FC$ is the largest subobject of $\FC$ with support in $\NC$.  

In particular, letting $\FC = \TMp (\p j_* \LC)$ we obtain a commutative triangle:

\begin{equation}
\xymatrix{
\TMp j_{!*} \LC \ar@{^{(}->}[rr]\ar@{_{(}->}[rd] && \TMp \p j_* \LC \\
& i_* \p i^! \TMp \p j_* \LC \ar@{^{(}->}[ur]&
}
\end{equation}
We now apply the exact functor $j^* \circ \TM$ to obtain:
\begin{equation}
\xymatrix{
\LC \ar[rr]^\sim \ar@{_{(}->}[rd] && \LC \\
& j^* \TM i_* \p i^! \TMp \p j_* \LC \ar@{^{(}->}[ur]&
}
\end{equation}
We conclude that the two bottom maps are also isomorphisms and thus
$j^* \circ \TM$ applied to the adjunction morphism gives the natural
equivalence we sought to prove.
\end{proof}

\bibliographystyle{myalpha}
\bibliography{gen}

\end{document}